\numberwithin{equation}{section}
\theoremstyle{plain} 
\newtheorem{theorem}[equation]{Theorem}
\newtheorem{corollary}[equation]{Corollary}
\newtheorem{lemma}[equation]{Lemma}
\newtheorem{proposition}[equation]{Proposition}
\theoremstyle{definition}
\newtheorem{conjecture}[equation]{Conjecture}
\theoremstyle{remark}
\newtheorem{remark}[equation]{Remark}
\def\C{{\mathbb{C}}}
\def\P{{\mathbb{P}}}
\def\Q{{\mathbb{Q}}}
\def\Z{{\mathbb{Z}}}
\def\Ch{{\rm Ch}}
\def\supp{{\rm supp}}
\def\cZ{{\mathcal{Z}}}
\title{Holomorphic vector fields and Chow groups}
\author{Wenchuan Hu}
\date{August 1, 2019}
\keywords{Holomorphic vector field, Chow group}
\address{
School of Mathematics\\
Sichuan University\\
Chengdu 610064\\
P. R. China
}
\email{huwenchuan@gmail.com}
\begin{document}

\begin{abstract}
We show that the chow group of $p$-cycles with rational coefficients are isomorphic to the corresponding rational homology groups for  smooth complex projective varieties carrying a holomorphic vector field with an isolated zero locus. As applications, we obtain  Chow groups and Lawson homology groups with rational coefficients and verify the Friedlander-Mazur conjecture and the Generalized Hodge conjecture for those varieties.
\end{abstract}

\maketitle
\pagestyle{myheadings}
 \markright{Holomorphic vector field and Chow groups}

\tableofcontents

\section{Introduction}


Let $V$ be a holomorphic vector field defined on a smooth projective algebraic variety
$X$. The zero subscheme  $Z$ is the subspace of $X$ defined by the ideal generated by $V\mathcal{O}_X$ and we denote it by $X^{V}$.
The existence of a holomorphic vector field with zeroes on a
compact complex manifold imposes restrictions on the topology  of the manifold. For example, the Hodge numbers
$h^{p,q}(X)=0$ if $|p-q|>\dim Z$ (see \cite{Carrell-Lieberman} and references therein).

According to Lieberman (\cite{Lieberman1}), a holomorphic vector field $V$ on a complex algebraic projective variety $X$ with nonempty zeroes is equivalent to
the 1-parameter group $G$ generated by $V$ is a product of $\C^*$'s and at most one $\C$'s. This induces us to study the structure of $X$ admitting a
$\C^*$-action or a $\C$-action.

For a smooth complex projective variety $X$ admitting a $\C^*$-action, Bialynicki-Birula structure theorem describes
the relation between the structure of $X$ and that of the fixed points set(\cite{Bialynicki-Birula2}). In particular, there is a homology (resp. Chow groups, Lawson homology, etc.) basis formula
connecting the homology groups (resp. Chow groups, Lawson homology groups, etc.) of $X$ and those of $X^{\C^*}$.

It is expected that a similar decomposition holds for a smooth complex projective variety $X$ admits a $\C$-action.
However, very little is known in this direction.

In section \ref{Section3} we will use the method on the  decomposition of  diagonal by Bloch and Srinivas(\cite{Bloch-Srinivas}, Bloch credits the idea to Colliot-Th\'{e}l\`{e}ne.) to compute the Chow groups of $X$ in the case that $X^{V}$ are isolated points. In rational coefficients, the Chow groups of $X$ are identified with their corresponding homological groups with rational coefficients.
\begin{theorem}\label{Thm1}
Let $X$ be a smooth complex projective algebraic variety which admits a holomorphic
vector field $V$ whose zero set $Z$ is isolated and nonempty. Then the cycle class map
$\Ch_p(X)\otimes \Q\to H_{2p}(X,\Q)$ is injective for all $p$.
\end{theorem}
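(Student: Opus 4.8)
The plan is to deduce the statement from two properties of such an $X$: that $\Ch_0(X)\otimes\Q=\Q$, which I will get from the algebraic group generated by $V$, and that $X$ has no odd rational cohomology, which is already contained in the results of Carrell--Lieberman \cite{Carrell-Lieberman} quoted above. Together these are exactly the input for the decomposition-of-the-diagonal technique of Bloch--Srinivas \cite{Bloch-Srinivas}, which then forces the cycle class map to be injective in every degree. To begin, passing to connected components I may assume $X$ irreducible. By Lieberman's theorem \cite{Lieberman1} the Zariski closure $G$ of the one-parameter group generated by $V$ is a connected commutative linear algebraic group, hence $G\cong(\C^*)^s\times\C^t$ with $t\le 1$, acting algebraically on $X$; a point is fixed by $G$ exactly when $V$ vanishes there, so $X^G$ is the zero set $Z$, which by hypothesis is finite and nonempty.

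Next I show $\Ch_0(X)\otimes\Q=\Q$. For $x\in X$ the orbit closure $\overline{G\cdot x}$ is a projective equivariant compactification of $G/G_x$; since $G/G_x$ is a connected commutative affine algebraic group it is isomorphic to some $(\C^*)^a\times\C^b$, hence rational, so $\overline{G\cdot x}$ is a rational projective variety and $\Ch_0(\overline{G\cdot x})\otimes\Q=\Q$. As $G$ is connected and solvable, Borel's fixed point theorem yields a $G$-fixed point in $\overline{G\cdot x}$, which lies in $Z$; therefore $[x]$ equals, in $\Ch_0(X)\otimes\Q$, the class of a point of $Z$, so $\Ch_0(X)\otimes\Q$ is supported on the finite set $Z$. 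Now Bloch--Srinivas' decomposition of the diagonal \cite{Bloch-Srinivas} applies: in $\Ch_n(X\times X)\otimes\Q$ (with $n=\dim X$) one has $\Delta_X=\Gamma_1+\Gamma_2$, where $\Gamma_1$ is supported on $D\times X$ for some divisor $D\subsetneq X$ and $\Gamma_2$ is supported on $X\times Z$. Letting these correspondences act on $\Ch_0(X)\otimes\Q$, the term $\Gamma_1$ acts by $0$ (its action factors through $\Ch_{-1}$ of a resolution of $D$) and $\Gamma_2$ sends every point class to one and the same $\Q$-linear combination of the classes of the points of $Z$; since $\Delta_X$ acts as the identity, all point classes on $X$ coincide, that is, $\Ch_0(X)\otimes\Q=\Q$.

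Now I invoke Carrell--Lieberman \cite{Carrell-Lieberman}: since $\dim Z=0$ we have $h^{p,q}(X)=0$ for $p\ne q$, hence $H^{2k-1}(X,\Q)=0$ for every $k$, and so every intermediate Jacobian of $X$ is trivial. On the other hand, $\Ch_0(X)\otimes\Q=\Q$ means that $\Ch_0(X)\otimes\Q$ is supported in dimension $\le j$ for \emph{every} $j\ge 0$ (take as support a $j$-dimensional linear section, whose point class is a generator). For each $p$ with $0\le p\le n-1$, apply Bloch--Srinivas' theorem \cite{Bloch-Srinivas} with the support chosen to be a linear section of dimension $n-1-p$: it gives that the Abel--Jacobi map on the homologically trivial part $\Ch_p(X)_{\mathrm{hom}}\otimes\Q\to J^{\,n-p}(X)\otimes\Q$ is injective (indeed $\Ch_p(X)_{\mathrm{hom}}\otimes\Q=\Ch_p(X)_{\mathrm{alg}}\otimes\Q$). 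Since $J^{\,n-p}(X)$ is built from $H^{2(n-p)-1}(X,\Q)=0$, the target vanishes, so $\Ch_p(X)_{\mathrm{hom}}\otimes\Q=0$; together with the trivial case $p=n$, this is exactly the injectivity of $\Ch_p(X)\otimes\Q\to H_{2p}(X,\Q)$ for all $p$.

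The Bloch--Srinivas results in the last step are used as a black box, and the first step is formal; the heart of the matter is showing that $\Ch_0(X)\otimes\Q$ is concentrated on the zero locus of $V$. When $G$ is a torus this is close to the Bialynicki--Birula picture, but a possible $\C$-factor means no Bialynicki--Birula decomposition is available, and one must argue directly with orbit closures and Borel's fixed point theorem. It is precisely here that the isolatedness of $Z$ is used in an essential way: it is what makes ``supported on $Z$'' mean ``supported in dimension $0$'', and that is what allows the Bloch--Srinivas estimate to reach \emph{every} codimension rather than only divisors. The vanishing of odd cohomology --- again a consequence of $Z$ being isolated, via Carrell--Lieberman --- is the complementary ingredient, killing the intermediate Jacobians, which are the only obstruction that survives the Bloch--Srinivas argument.
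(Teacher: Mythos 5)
Your reduction to the group generated by $V$, your computation of $\Ch_0(X)\otimes\Q=\Q$ via orbit closures and the Borel fixed point theorem, and the use of Carrell--Lieberman to kill the odd cohomology are all fine, and they parallel what the paper does (its Proposition \ref{Prop2.2} and the remark following it). The gap is in the final step: you assert that Bloch--Srinivas, applied with $\Ch_0(X)$ supported in dimension $\leq n-1-p$, yields injectivity of the Abel--Jacobi map $\Ch_p(X)_{hom}\otimes\Q\to J^{\,n-p}(X)\otimes\Q$ for \emph{every} $p$. No such theorem exists. What Bloch--Srinivas actually prove from smallness of $\Ch_0$ concerns codimension $\leq 2$ only: codimension $1$ is Abel's theorem, and codimension $2$ is their Merkurjev--Suslin argument (plus statements about ${\rm Griff}^2$ and the Hodge conjecture in degree $4$). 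For codimension $\geq 3$ the statement you invoke is precisely the open Bloch--Beilinson-type prediction that ``$\Ch_0(X)\otimes\Q=\Q$ and $h^{p,q}=0$ for $p\neq q$'' forces all cycle class maps to be injective; it cannot be quoted as a black box. Concretely, the basic decomposition $N\Delta_X=\Gamma_1+\Gamma_2$ with $\Gamma_1$ supported on $D\times X$ only shows that the identity of $\Ch_p(X)_{hom}\otimes\Q$ ($p\geq 1$) factors through $\Ch_{p-1}(\widetilde D)_{hom}\otimes\Q$ for a resolution $\widetilde D$ of some divisor $D$, and with no control over $D$ the argument stops there; already for $\dim X=4$ and $p=1$ (codimension $3$) your proposal proves nothing.

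This is exactly the point where the paper's proof does more work, and the missing idea is the $\C$-invariance of the supports. The paper argues that the pieces of the diagonal decomposition can be chosen invariant under the flow (via the Harvey--Lawson graph of the flow), so that the divisor $D$, and afterwards each lower-dimensional support $V_{n-k}$, again carries a $\C$-action with fixed locus the single point $p_0$; Proposition \ref{Prop2.2} then gives $\Ch_0$ of these supports $\cong\Z$, and the correspondence lemmas (Proposition \ref{Prop3..4} and Lemma \ref{lemma3.5}) kill the action on $\Ch_p(X)_{hom}\otimes\Q$ one codimension at a time, allowing the decomposition to be refined step by step (Proposition \ref{Prop3.3}) and the vanishing $\Ch_p(X)_{hom}\otimes\Q=0$ to be established by induction on $p$. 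In other words, the theorem is not a formal consequence of ``$\Ch_0$ trivial plus no odd cohomology''; the vector field must be used again at every stage of the diagonal decomposition, and that inductive bootstrapping is what your proposal omits. (Your argument is correct as written only for $\dim X\leq 3$, where codimension $\leq 2$ suffices.)
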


Furthermore, the injectivity of $\Ch_p(X)\otimes \Q\to H_{2p}(X,\Q)$ for all $p$ implies that they are isomorphisms for all $p$.
As applications, we show that the corresponding Chow groups, Lawson homology groups and singular homology groups are isomorphic in rational coefficients.
Moreover,  the Friedlander-Mazur conjecture and the generalized Hodge conjecture hold for such an $X$.

If $X$ is singular, much weaker results can be obtained in general when $X$ either admits a $\C^*$-action or a $\C$-action. The details and application in this direction will appear in a subsequent paper.

\section{Invariants under the additive group action}
\label{section2}
Let $X$ be a possible singular complex
projective algebraic variety $X$ admitting an additive group action.
Our main purpose is to compare certain algebraic and topological invariants (such as the Chow group of zero cycles, Lawson homology, singular homology, etc.) of $X$ to those of the fixed point set $X^{\C}$.
If $X$ is smooth projective, most of topological invariants are studied and  computed in details, but some of algebraic invariants are still hard to identified.
 Some of those invariants have been investigated even if $X$ is singular. In this section, we will identify some of these invariants including the Chow groups of zero cycles, Lawson homology
 for 1-cycles, singular homology with integer coefficients, etc.

\subsection{A-equivalence}

Let $A$ be a fixed  complex quasi-projective algebraic variety. Recall that  an algebraic scheme $X_1$ is \textbf{simply
$A$-equivalent} to an algebraic variety $X_2$ if $X_1$ is isomorphic to a closed subvariety $X_2'$ of
$X_2$ and there exists an isomorphism $f: X_2- X_2'\to Y \times A$, where $Y$ is an algebraic variety.
The smallest equivalence relation containing the relation of simple $A$-equivalence is called
the \textbf{A-equivalence} and we denote it by $\sim$ (see \cite{Bialynicki-Birula1}). A result of Bialynicki-Birula
says that $X{\sim} X^{\C}$ if $X$ is a quasi-projective variety admitting a $\C$-action.
A similar statement  holds for  $X$ admitting $\C^*$-action.
From this, Bialynicki-Birula showed that  $H^0(X,\Z)\cong H^0(X^{\C},\Z)$ and $H^1(X,\Z)\cong H^1(X^{\C},\Z)$ in the case that $X$ admits a $\C$-action, where
$\chi(X)=\chi(X^{\C^*})$ in the case that $X$ admits a $\C$-action (see \cite{Bialynicki-Birula1}). Along this route, more additive invariants has been
calculated for varieties admits a $\C$ or $\C^*$-action (see \cite{Hu}).
\subsection{Chow Groups}

Let $X$ be any complex projective variety or scheme of dimension $n$ and let $\cZ_p(X)$  be the group of algebraic $p$-cycles on $X$.
 Let $\Ch_p(X)$ be the Chow group of $p$-cycles on $X$, i.e.
$\Ch_p(X)=\cZ_p(X)/{\rm \{rational ~equivalence\}}$.  Set $\Ch_p(X)_{\Q}:=\Ch_p(X)\otimes \Q$, $\Ch_p(X)=\bigoplus_{p\geq0} \Ch_p(X)$.
Let $A_p(X)$ be the space of $p$-cycles on $X$ modulo
the algebraic equivalence, i.e. $A_p(X)=\cZ_p(X)/{\sim_{alg}}$, where $\sim_{alg}$ denotes the algebraic equivalence.
For convenience, set $A_p(X)_{\Q}:=A_p(X)\otimes \Q$,    $\Ch^q(X)_{\Q}=\Ch_{\dim X-q}(X)_{\Q}$ and
$A^q(X)_{\Q}=A_{\dim X-q}(X)_{\Q}$. Let $cl_p: \Ch_p(X)\to H_{2p}(X,\Z)$ be the cycle class map. Denoted by $\Ch_p(X)_{hom}:=\ker(cl_p)$.
There are all kinds of  functorial properties including pull-forward for morphisms, pull-back for flat morphisms, homotopy invariance property
and a well-defined intersection theory on smooth projective varieties, etc.
For more details on Chow theory, the reader is  referred to Fulton (\cite{Fulton}).

Recall that (cf. \cite{Bloch}) for each $m\geq 0$, let
$$
\Delta[d]:=\{t\in \C^{d+1}|\sum_{i=0}^{m} t_i=1\}\cong \C^d.
$$
and let $z^l(X,d)$ denote the free abelian group generated by irreducible subvarieties of codimension-$l$ on $X\times \Delta[d]$ which
meets $X\times F$ in proper dimension for each face $F$ of $\Delta[d]$. Using intersection and pull-back of algebraic cycles,
we can define face and degeneracy relations and obtain a simplicial abelian group structure for $z^l(X,d)$.
Let $|z^l(X,*)|$ be the geometric realization of $z^l(X,*)$.  Then the higher Chow group is defined as
\begin{equation*}
\Ch^l(X,k):=\pi_k(|z^l(X,*)|)
\end{equation*}
 and set $ \Ch_{l}(X,k):=\Ch^{n-l}(X,k).$  In particular, $ \Ch_{l}(X,0)=\Ch_l(X)$.

Note that  there exists a long exact sequence for higher Chow groups (see \cite{Bloch}):
\begin{equation}\label{equ2.1}
...\to\Ch_r(Y,k)\to\Ch_r(X,k)\to\Ch_r(U,k)\to\Ch_r(Y,k-1)\to...
\end{equation}
for any triple $(X,Y, U)$, where $X$ is a quasi-projective variety, $Y\subset X$ a closed subvariety and $U\cong X-Y$.

Moreover, there are homotopy invariance for higher Chow groups, i.e.,
\begin{equation}\label{equ2.2}
\Ch_r(X,k)\cong \Ch_{r+1}(X\times \C,k)
\end{equation}
for any quasi-projective variety $X$.

\begin{proposition}\label{Prop2.2}
Let $X$ be a (possible singular) connected  complex projective variety. If $X$ admits a $\C$-action with isolated fixed points, then $\Ch_0(X)\cong \Z$.
\end{proposition}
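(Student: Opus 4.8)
The plan is to exploit the topological input recalled at the start of this section to reduce to the case of a \emph{single} fixed point, and then to show directly that every closed point of $X$ is rationally equivalent to that point; the degree map will finish the identification $\Ch_0(X)\cong\Z$.

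First I would pin down the fixed locus. Since $X$ is connected, $H^0(X,\Z)\cong\Z$, while $X^\C$ has finitely many points, say $d$ of them, so $H^0(X^\C,\Z)\cong\Z^d$. Bialynicki-Birula's isomorphism $H^0(X,\Z)\cong H^0(X^\C,\Z)$ for a projective variety with a $\C$-action (quoted above) then forces $d=1$; write $X^\C=\{p\}$. (In particular this reproves $X^\C\neq\emptyset$.)

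Next I would prove that $[x]=[p]$ in $\Ch_0(X)$ for every closed point $x\in X$. If $x=p$ this is trivial. If $x\neq p$ then $x$ is not fixed, so its isotropy group is a proper closed subgroup of the additive group $\C=\G_a$ and hence is trivial; thus the orbit $\C\cdot x$ is one-dimensional, and the orbit morphism $\A^1\cong\C\to X$, $t\mapsto t\cdot x$, extends by the valuative criterion of properness to a morphism $f\colon\P^1\to X$ whose image is the orbit closure $C:=\overline{\C\cdot x}$. Then $C$ is an irreducible rational curve with $x\in C$, and $C\setminus(\C\cdot x)=f(\{\infty\})$ is a single $\C$-fixed point, hence equals $p$. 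Composing $f$ with the normalization of $C$ (needed only if $C$ is singular), using that any two closed points of $\P^1$ are rationally equivalent, and pushing forward through $\P^1\to C\hookrightarrow X$ gives $[x]=[p]$ in $\Ch_0(X)$. Alternatively one can apply the localization sequence \eqref{equ2.1} to the triple $(X,\{p\},U)$ with $U=X\setminus\{p\}$: every orbit in $U$ is closed in $U$ and isomorphic to $\A^1$, so $[x]=0$ in $\Ch_0(\A^1)=0$ gives $\Ch_0(U)=0$, whence $\Ch_0(\{p\})\to\Ch_0(X)$ is surjective.

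It follows that $\Z\to\Ch_0(X)$, $1\mapsto[p]$, is surjective; since $X$ is projective the degree homomorphism $\deg\colon\Ch_0(X)\to\Z$ is well defined and $\deg[p]=1$, so this surjection splits and is therefore an isomorphism. The only step carrying real content is the reduction to a single fixed point, which is precisely the cited $H^0$-statement of Bialynicki-Birula (note that the analogue for torus actions is false, as $\C^*$ acting on $\P^1$ shows); after that the argument is formal, the one technical nuisance being that an orbit closure may be a singular rational curve, so one passes to its normalization, and that the point of the orbit closure outside the orbit is fixed — both entirely standard.
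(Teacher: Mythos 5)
Your proof is correct, but it is not the argument the paper gives for this proposition: it is essentially the alternative sketched in the remark immediately following it. The paper's own proof is an induction on $\dim X$ using the Bia{\l}ynicki-Birula structure: it chooses a $\C$-invariant open set $U\cong U'\times\C$ with invariant complement $Z$, uses the Borel--Moore homology exact sequence to see that $Z$ is connected (so the induction hypothesis applies to $Z$), uses the Chow localization sequence together with homotopy invariance ($\Ch_0(U'\times\C)=0$) to get a surjection $\Z\cong\Ch_0(Z)\to\Ch_0(X)$, and finally rules out a finite cyclic quotient by comparing with $H_0$ via the cycle class map. You instead reduce to a single fixed point $p$ by the quoted $H^0$-comparison of Bia{\l}ynicki-Birula, exhibit every closed point on a rational curve (an orbit closure) through $p$, and split the surjection $\Z\to\Ch_0(X)$ with the degree map, which plays the same role as the paper's $H_0$-diagram. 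Your route is shorter, more geometric, and avoids both the induction and the homology comparison; what the paper's localization-plus-induction scheme buys is reusability, since the identical diagram chase is then run again for $L_1H_k$ in Proposition \ref{Prop2.4} and Corollary \ref{coro2.11} and for $H_1$ in Proposition \ref{Prop2.12}, whereas the orbit-closure argument is special to $0$-cycles. Two small points you should state explicitly, though both are standard and harmless: that $f(\infty)$ is indeed fixed (e.g.\ because $s\cdot f(z)=f(z+s)$ holds on $\A^1$ and both sides extend to $\P^1$, or because the boundary of the orbit closure is a finite invariant set and hence consists of fixed points), and, in your localization variant, that $\Ch_0(U)=0$ because every point of $U=X\setminus\{p\}$ lies on an orbit that is closed in $U$ and isomorphic to $\A^1$, whose $\Ch_0$ vanishes.
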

\begin{proof}
Since  $X$  admits a $\C$-action with isolated fixed points, there exists a $\C$-invariant Zariski open set $U\subset X$  such that $U\cong U'\times \C$ (see \cite{Bialynicki-Birula1}).
Such  $U$ and $U'$  can be assumed to be non-singular if necessary.
The compliment $Z:=X-U$ is also $\C$-invariant. We have the long exact sequence for homology groups
$$
        ...\to H_1^{BM}(U,\Z)\to H_0(Z,\Z)\to H_0(X,\Z)\to H_0^{BM}(U,\Z)\to 0.
$$
Note that $H_0^{BM}(U,\Z)=H_0^{BM}(U'\times\C,\Z)\stackrel{P.D.}{\cong} H_{2n}(U'\times\C,\Z)=0$ and
$H_1^{BM}(U,\Z)=H_1^{BM}(U'\times\C,\Z)\stackrel{P.D.}{\cong} H_{2n-1}(U'\times\C,\Z)=H_{2n-1}(U',\Z)=0$, where P.D. denotes the Poincare Duality for Borel-Moore homology and the last
equality holds since $\dim_{\C} U'=n-1$. Hence $H_0(Z,\Z)\cong H_0(X,\Z)\cong \Z$ and so $Z$ is connected in the complex topology.

By the localization sequence of Chow groups, we have the exact sequence
$$\Ch_0(Z)\to \Ch_0(X)\to \Ch_0(U)\to 0.$$

By the induction hypothesis,
one has $\Ch_0(Z)\cong \Z$.
  Since $U\cong U'\times \C$, we have $\Ch_0(U)\cong\Ch_0(U'\times \C)=0$. Therefore, $\Z\to \Ch_0(X)$ is surjective. So $\Ch_0(X)$ is isomorphic to
 $\Z$ or $\Z_m$ for some positive integer $m$.

By the following commuative
diagram
$$
\xymatrix{ \Ch_0(Z)\ar[r]\ar[d]^{\cong} &\Ch_0(X)\ar[r]\ar[d] &\Ch_0(U)\ar[d]^{=}\ar[r] &0\\
 H_0(Z,\Z)\ar[r] &H_0(X,\Z)\ar[r] &H_0^{BM}(U,\Z)\ar[r] &0,
}
$$
we have $\Ch_0(X)\cong\Z$ since $\Ch_0(U)\cong H_0^{BM}(U,\Z)=0$ and $H_0(X,\Z)\cong \Z$.
This completes the proof of the proposition.
\end{proof}

\begin{remark}
There is another method to show that $\Ch_0(X)\cong\Z$. Since  $X$ is connected and  admits a $\C$-action with isolated fixed points, the fixed points $X^{\C}$ must be exactly one point $p_0$(\cite[Cor. 1]{Bialynicki-Birula1}).
The closure of the orbit of any a point $y\in X$ contains the fixed point $p_0$. Therefore, $y$ is rationally connected to $p_0$ and hence by definition of the Chow group, one has $\Ch_0(X)\cong\Z$.
In particular, if $X$ is a smooth projective variety, then $X$ is rationally connected(see \cite[Prop. 3]{Hwang1}) and so $\Ch_0(X)\cong\Z$.
\end{remark}

\begin{remark}
More generally, by using the same method, we can show that if $X$ admits a $\C$-action with  fixed points $X^{\C}$, then $\Ch_0(X)\cong \Ch_0(X^{\C})$.
\end{remark}

\subsection{Lawson homology}

The \emph{Lawson homology}
$L_pH_k(X)$ of $p$-cycles for a projective variety is defined by
$$L_pH_k(X) := \pi_{k-2p}({\mathcal Z}_p(X)) \quad for\quad k\geq 2p\geq 0,$$
where ${\mathcal Z}_p(X)$ is provided with a natural topology (cf.
\cite{Friedlander1}, \cite{Lawson1}). It has been extended to define for a quasi-projective  variety by Lima-Filho (cf. \cite{Lima-Filho}).
For general background, the reader is referred to Lawson' survey paper \cite{Lawson2}.
The definition of Lawson homology has been extended to negative integer $p$. Formally for $p<0$, we have
$L_pH_k(X)=\pi_{k-2p}({\mathcal Z}_{0}(X\times \mathbb{C}^{-p}))=H^{BM}_{k-2p}(X\times\mathbb{C}^{-p},\Z)=H^{BM}_k(X,\Z)=L_0H_k(X)$
(cf. \cite{Friedlander-Haesemesyer-Walker}), where $H^{BM}_{*}(-,\Z)$ denotes the Borel-Moore homology with $\Z$-coefficients.

In \cite{Friedlander-Mazur}, Friedlander and Mazur showed that there
are  natural transformations, called \emph{Friedlander-Mazur cycle class maps}
\begin{equation}\label{eq01}
\Phi_{p,k}:L_pH_{k}(X)\rightarrow H_{k}(X,\Z)
\end{equation}
for all $k\geq 2p\geq 0$.

Recall that Friedlander and Mazur constructed a map called the $s$-map $s:L_pH_k(X)\to L_{p-1}H_k(X)$ such that the cycle class map
$\Phi_{p,k}=s^p$ (\cite{Friedlander-Mazur}).
Explicitly, if $\alpha\in L_pH_k(X)$ is represented by the homotopy class of a continuous map  $f:S^{k-2p}\to \cZ_p(X)$, then
$\Phi_{p,k}(\alpha)=[f\wedge S^{2p}]$, where $S^{2p}=S^2\wedge\cdots\wedge S^2$ denotes the $2p$-dimensional topological sphere.

Set
{$$
\begin{array}{llcl}
&L_pH_{k}(X)_{hom}&:=&{\rm ker}\{\Phi_{p,k}:L_pH_{k}(X)\rightarrow
H_{k}(X)\};\\
&L_pH_{k}(X)_{\Q}&:=&L_pH_{k}(X)\otimes\Q.
\end{array}
 $$}

Denoted by $ \Phi_{p,k,\Q}$ the map $ \Phi_{p,k}\otimes{\Q}:L_pH_{k}(X)_{\Q}\rightarrow H_{k}(X,\Q) $.
The \emph{Griffiths group} of dimension $p$-cycles is defined to be
$$
{\rm Griff}_p(X):={\mathcal Z}_p(X)_{hom}/{\mathcal Z}_p(X)_{alg}.$$

Set
$$
\begin{array}{lcl}
{\rm Griff}_p(X)_{\Q}&:=&{\rm Griff}_p(X)\otimes\Q;\\
{\rm Griff}^q(X)&:=&{\rm Griff}_{n-q}(X);\\
{\rm Griff}^q(X)_{\Q}&:=&{\rm Griff}_{n-q}(X)_{\Q}.
\end{array}
$$

It was proved by Friedlander \cite{Friedlander1} that, for any
smooth projective variety $X$, $$L_pH_{2p}(X)\cong {\mathcal
Z}_p(X)/{\mathcal Z}_p(X)_{alg}=A_p(X).$$

Therefore
\begin{eqnarray*}
L_pH_{2p}(X)_{hom}\cong {\rm Griff}_p(X).
\end{eqnarray*}

\begin{proposition}\label{Prop2.4}
Under the same assumption as Proposition \ref{Prop2.2},  we have
$$L_1H_k(X)\cong H_k(X,\Z)$$ for all $k\geq 2$. In particular, ${\rm Griff}_1(X)=0$.
\end{proposition}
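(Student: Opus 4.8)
The plan is to follow the strategy of Proposition~\ref{Prop2.2}, replacing the Chow group of zero-cycles by Lawson homology of $1$-cycles and $H_0(-,\Z)$ by $H_*(-,\Z)$, and to induct on $n=\dim X$. For $n=0$ the variety $X$ is a point and both sides vanish for $k\ge 2$. For the inductive step, exactly as in the proof of Proposition~\ref{Prop2.2} one chooses a $\C$-invariant Zariski open $U\subset X$ with $U\cong U'\times\C$, where $U$ and $U'$ may be taken smooth and $\dim_{\C}U'=n-1$; then $Z:=X-U$ is a closed $\C$-invariant subvariety with $\dim Z<n$. Since $\C=\G_a$ is connected it preserves each connected component of $Z$, so every such component is projective and carries a $\C$-action with isolated fixed points, nonempty by Borel's fixed point theorem; the inductive hypothesis then gives $L_1H_k(Z)\cong H_k(Z,\Z)$ for all $k\ge 2$ (the isomorphism being the cycle class map, and additivity over components being harmless). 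Recall finally that $X$ and $Z$ are projective, so their Borel--Moore and singular homologies coincide.

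The crux is the comparison over $U$. Homotopy invariance for Lawson homology gives $L_1H_k(U'\times\C)\cong L_0H_{k-2}(U')$, and $L_0H_j(U')\cong H_j^{BM}(U',\Z)$ by the quasi-projective Dold--Thom theorem of Lima-Filho (\cite{Lima-Filho}), the map $\Phi_{0,j}$ being precisely this identification. On the singular side, the K\"unneth formula (equivalently, homotopy invariance of Borel--Moore homology) gives $H_k^{BM}(U'\times\C,\Z)\cong H_{k-2}^{BM}(U',\Z)$. Since the Friedlander--Mazur cycle class map is compatible with flat pullback and with external products of cycles, $\Phi_{1,k}\colon L_1H_k(U)\to H_k^{BM}(U,\Z)$ is an isomorphism for every $k\ge 2$; note that, unlike in Proposition~\ref{Prop2.2}, this uses only the product structure $U\cong U'\times\C$ and no vanishing of $H_*^{BM}(U)$.

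I would then run a five-lemma/diagram-chase argument on the ladder whose rows are the localization long exact sequences in Lawson homology and in Borel--Moore homology for the pair $Z\subset X$ and whose vertical maps are the cycle class maps $\Phi_{1,*}$. For $k\ge 3$, the four terms flanking $L_1H_k(X)\to H_k(X,\Z)$, namely $L_1H_{k+1}(U)$, $L_1H_k(Z)$, $L_1H_k(U)$ and $L_1H_{k-1}(Z)$, all map isomorphically onto their Borel--Moore counterparts --- the $U$-terms by the previous paragraph, the $Z$-terms by the inductive hypothesis (here $k-1\ge 2$) --- so the five lemma yields $L_1H_k(X)\cong H_k(X,\Z)$. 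The case $k=2$ is the delicate endpoint, since the Lawson sequence terminates, $\cdots\to L_1H_2(Z)\to L_1H_2(X)\to L_1H_2(U)\to 0$, while the Borel--Moore sequence runs on past $H_2^{BM}(U)$ into $H_1(Z),H_1(X),\dots$; here I would chase by hand. For injectivity, a class in $L_1H_2(X)$ annihilated by $\Phi_{1,2}$ dies in $L_1H_2(U)$ (that vertical map being an isomorphism), hence comes from $L_1H_2(Z)$, and a short chase through the boundary $L_1H_3(U)\to L_1H_2(Z)$ --- using that $\Phi$ is an isomorphism on $L_1H_3(U)$ and on $L_1H_2(Z)$ and commutes with connecting maps --- shows the lift already lies in the image of that boundary, so the class is zero. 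For surjectivity, given $\zeta\in H_2(X,\Z)$ one lifts its image in $H_2^{BM}(U)$ through $L_1H_2(U)\cong H_2^{BM}(U)$ and then, by exactness of the Lawson row, through $L_1H_2(X)$; the difference between $\zeta$ and the cycle class of this lift lies in the image of $H_2(Z)$, which is accounted for by lifting through $L_1H_2(Z)\cong H_2(Z)$. This proves $L_1H_k(X)\cong H_k(X,\Z)$ for all $k\ge 2$; in particular $\Phi_{1,2}$ is injective, so $L_1H_2(X)_{hom}=0$ and hence ${\rm Griff}_1(X)=0$.

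The main obstacle is not the chase but checking that its inputs are genuinely available in the singular quasi-projective setting: that the localization long exact sequence for Lawson homology exists in this form and terminates at $L_1H_2$, that the Friedlander--Mazur cycle class maps assemble into a morphism of long exact sequences commuting with the connecting homomorphisms, and that $\Phi$ is compatible with the homotopy-invariance isomorphism $L_1H_k(-\times\C)\cong L_0H_{k-2}(-)$ and with the Dold--Thom identification $L_0H_j\cong H_j^{BM}$. These are all in the literature (\cite{Lima-Filho}, \cite{Friedlander1}, \cite{Friedlander-Mazur}) but need to be invoked precisely; the remaining ingredients --- Borel's fixed point theorem, the K\"unneth formula for Borel--Moore homology, and the additivity of Lawson homology over connected components --- are routine.
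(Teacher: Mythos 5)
Your proposal follows essentially the same route as the paper: induction on dimension using the $\C$-invariant open set $U\cong U'\times\C$, the localization ladder comparing Lawson and Borel--Moore homology via the cycle class maps, homotopy invariance plus Dold--Thom on the $U$-terms, and the five lemma. The only difference is that you treat the endpoint $k=2$ (where $L_1H_1(Z)$ is undefined and the Lawson row terminates) by an explicit chase, a point the paper's five-lemma argument passes over silently; this is a welcome refinement but not a different proof.
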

\begin{proof}
Since the natural transform $\Phi_{p,k}:L_pH_k(-)\to  H_k(-,\Z)$ is a natural transform and  there exists a long localization sequence of Lawson homology, one has the
following commutative diagram of long exact sequences (see \cite{Lawson1}, \cite{Lima-Filho})
{\tiny
$$
\xymatrix{L_1H_{k+1}(U)\ar[r]\ar[d]^{\Phi_{p,k+1}}& L_1H_k(Z)\ar[r]\ar[d]^{\cong} &L_1H_k(X)\ar[r]\ar[d]^{\Phi_{p,k}} &L_1H_k(U)\ar[d]^{\Phi_{p,k}}\ar[r] &L_1H_{k-1}(Z)\ar[d]^{\cong}\\
H_{k+1}^{BM}(U,\Z) \ar[r]& H_{k}(Z,\Z)\ar[r] &H_{k}(X,\Z)\ar[r] &H_{k}^{BM}(U,\Z)\ar[r] & H_{k-1}(Z),
}
$$}
\noindent
where $Z$ and $U$ are the same as in the proof of Proposition \ref{Prop2.2}. Since $Z$ admits $\C$-action with isolated point and connected as shown in Proposition \ref{Prop2.2},
 we have by  the induction hypothesis that $\Phi_{1,k}:L_1H_k(Z)\cong H_k(X,\Z)$ for all $k\geq 2$.

Note that $L_1H_k(U)=L_1H_k(U'\times\C)\stackrel{S}{\cong} L_0H_{k-2}(U')\stackrel{D.T.}{\cong} H_{k-2}^{BM}(U')$ and
$L_1H_{k+1}(U)=L_1H_{k+1}(U'\times\C)\stackrel{S}{\cong} L_0H_{k-1}(U')=H_{k-1}^{BM}(U',\Z)$, where $S$ denotes the Suspension isomorphism  for Lawson homology and D.T denotes the
Dold-Thom theorem.  Hence $L_1H_k(X)\cong H_k(X,\Z)$ follows from the Five Lemma. From the fact that ${\rm Griff}_1(X)\cong \ker \{\Phi_{1,2}:L_1H_k(X)\cong H_k(X,\Z)\}$, we get ${\rm Griff}_1(X)=0$.
This completes the proof of the proposition.
\end{proof}

\begin{remark}
The isomorphism $L_0H_k(X)\cong H_k(X,\Z)$ holds for any integer $k\geq 0$, which is the special case of the Dold-Thom Theorem.
\end{remark}

\begin{remark}
The assumption of ``connectedness" in Proposition \ref{Prop2.4} is not necessary. By the same reason, we can remove the connectedness in Proposition \ref{Prop2.2}, while the conclusion
``$\Ch_0(X)\cong \Z$" would
be replaced by ··$\Ch_0(X)\cong H_0(X,\Z)$”.
\end{remark}

\begin{remark}
In case that $X$ is smooth projective, the statement ``$\Ch_0(X)\cong \Z$" implies that $L_1H_k(X)_{hom}\otimes \Q=0$ for all $k\geq 2$ (see \cite{Peters}).
However, if $X$ is singular projective variety, ``$\Ch_0(X)\cong \Z$" and  ``$L_1H_k(X)\cong H_k(X,\Z)$" are independent statements
in the sense that we can find examples such that one  holds but the other fail (see \cite{Hu2}).
\end{remark}

From the proof of Proposition, we have actually shown the following result.
\begin{corollary}\label{coro2.11}
Let $X$ be a (possible singular,  reducible)  complex projective variety admitting  a $\C$-action. Let $X^{\C}$ be the set of  fixed points.
If we have $L_1H_k(X^{\C})\cong H_k(X^{\C},\Z)$  for all $k\geq 2$, then
$$L_1H_k(X)\cong H_k(X,\Z)$$ for all $k\geq 2$.
\end{corollary}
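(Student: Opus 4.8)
The plan is to run the argument of Proposition~\ref{Prop2.4} again, but to feed it by a noetherian induction along a Bialynicki--Birula filtration of $X$ instead of the dimension induction that was available in the isolated--fixed--point case. First I would apply the theorem of Bialynicki--Birula (\cite{Bialynicki-Birula1}) repeatedly: starting from $X$, choose a nonempty $\C$-invariant Zariski open $U_1\subseteq X$ with $U_1\cong W_1\times\C$ for some quasi-projective $W_1$, set $Z_1:=X\setminus U_1$, then repeat the construction inside the $\C$-variety $Z_1$, and so on. Each step replaces the current variety by a proper closed $\C$-invariant subvariety, so by noetherianity the process terminates, and it can terminate only at a variety with no nontrivial $\C$-orbit, i.e.\ at $X^{\C}$. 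This produces a finite filtration $$X=Z_0\supsetneq Z_1\supsetneq\cdots\supsetneq Z_r=X^{\C}$$ by closed $\C$-invariant subvarieties with $Z_{i-1}\setminus Z_i\cong W_i\times\C$; since each $W_i\times\C$ is fixed--point free and each $Z_i$ is $\C$-invariant, one has $(Z_i)^{\C}=X^{\C}$ for all $i$. This bookkeeping is the only real addition to the proof of Proposition~\ref{Prop2.4}: once $X^{\C}$ is allowed to be positive dimensional, irreducible components of $X$ contained in $X^{\C}$ can no longer be excluded on dimension grounds, so the naive induction on $\dim X$ must be replaced by this filtration.

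Next I would prove $L_1H_k(Z_i)\cong H_k(Z_i,\Z)$ for all $k\ge 2$ by descending induction on $i$, the case $i=r$ being the hypothesis since $Z_r=X^{\C}$. For the step from $Z_i$ to $Z_{i-1}$, apply the Friedlander--Mazur cycle class maps to the localization long exact sequences of Lawson homology and of Borel--Moore homology attached to the pair $(Z_{i-1},Z_i)$ with open complement $U_i\cong W_i\times\C$, obtaining exactly the commutative ladder displayed in the proof of Proposition~\ref{Prop2.4}, with $Z$, $U$ replaced by $Z_i$, $U_i$. The columns over $Z_i$ are isomorphisms by the inductive hypothesis. For the columns over $U_i$, the suspension isomorphism for Lawson homology gives $L_1H_k(W_i\times\C)\cong L_0H_{k-2}(W_i)$, the Dold--Thom theorem identifies $L_0H_{k-2}(W_i)$ with $H_{k-2}^{BM}(W_i,\Z)$, and the analogous suspension isomorphism for Borel--Moore homology gives $H_k^{BM}(W_i\times\C,\Z)\cong H_{k-2}^{BM}(W_i,\Z)$; under these identifications $\Phi_{1,k}$ becomes the Dold--Thom isomorphism $\Phi_{0,k-2}$, so these columns are isomorphisms as well. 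The Five Lemma then forces $\Phi_{1,k}\colon L_1H_k(Z_{i-1})\to H_k(Z_{i-1},\Z)$ to be an isomorphism for all $k\ge 3$, and taking $i=0$ gives the corollary.

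As in Proposition~\ref{Prop2.4}, the left end of the ladder needs a small adjustment when $k=2$: the Lawson--homology sequence terminates at $L_1H_2(U_i)\to 0$ because $L_1H_j$ is defined only for $j\ge 2$, whereas the Borel--Moore sequence continues into $H_1(Z_i,\Z)$. There the conclusion follows from the two forms of the Four Lemma, using that $0\to H_1(Z_i,\Z)$ is injective and that the maps $\Phi_{1,k}$ on $U_i$ (for $k=2,3$) and $\Phi_{1,2}$ on $Z_i$ are already known to be isomorphisms. I expect the only genuine point to watch is not the diagram chasing but the commutativity of the ladder itself, namely that the cycle class map $\Phi_{1,\bullet}=s$ is compatible both with the localization boundary maps and with the suspension isomorphism; both compatibilities belong to the formalism of \cite{Friedlander-Mazur} and are already used implicitly in the proof of Proposition~\ref{Prop2.4}, so I would simply invoke them.
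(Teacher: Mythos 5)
Your proposal is correct and takes essentially the same route as the paper: the paper's own (very terse) proof of Corollary~\ref{coro2.11} simply reruns the localization-ladder and Five Lemma argument of Proposition~\ref{Prop2.4}, with the induction hypothesis now supplied by the assumption on $X^{\C}$, which is exactly what your Bialynicki--Birula filtration with descending induction accomplishes. Your explicit noetherian bookkeeping (the invariant filtration terminating at $X^{\C}$) and the four-lemma treatment of the $k=2$ end of the sequence merely spell out steps the paper leaves implicit.
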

\begin{proof}
In the proof Proposition \ref{Prop2.4}, the hypothesis induction we actually used is the isomorphism ``$L_1H_k(Z)\cong H_k(Z,\Z)$" if $\dim Z<\dim X$ and $Z^{\C}=X^{\C}$,
which is implied by the assumption $X^{\C}$ is isolated. Now the isomorphism ``$L_1H_k(Z)\cong H_k(Z,\Z)$" if $\dim Z<\dim X$ and $Z^{\C}=X^{\C}$ is our assumption. This
completes the proof of the corollary.
\end{proof}

\subsection{The virtual Betti  and Hodge numbers }
 Recall that
 the \emph{virtual Hodge polynomial} $H:Var_{\C}\to \Z[u,v]$ is defined by  the following properties:
\begin{enumerate}
 \item $H_X(u,v):=\sum_{p,q}(-1)^{p+q}\dim H^{q}(X,\Omega_X^p)u^pv^q$ if $X$ is nonsingular and  projective (or complete).
\item  $H_X(u,v)=H_U(u,v)+H_Y(u,v)$ if $Y$ is a closed algebraic subset of $X$ and $U=X-Y$.
\item  If $X=Y\times Z$, then $H_X(u,v)=H_Y(u,v)\cdot H_Z(u,v)$.
\end{enumerate}

For example, $H_{\P^1}(u,v)=1+uv$, $H_{\P^0}(u,v)=1$, $H_{\C}(u,v)=(1+uv)-1=uv$, and $H_{\C^m}(u,v)=(uv)^m$.
For a smooth algebraic curve $C$ of genus $g$,  $H_{C}(u,v)=uv+gu+gv+1$.
The existence and uniqueness of such a polynomial follow from Deligne's Mixed Hodge theory (see \cite{Deligne1,Deligne2}).
The coefficient of $u^pv^q$ of $H_X(u,v)$ is called the \emph{virtual Hodge $(p,q)$-number} of $X$ and we denote it by $\tilde{h}^{p,q}(X)$.
Note that from the definition, $\tilde{h}^{p,q}(X)$ coincides with the usual Hodge number $(p,q)$-number ${h}^{p,q}(X)$ if $X$ is a smooth
projective variety. The sum $\tilde{\beta}^k(X):=\sum_{i+j=k}\tilde{h}^{p,q}(X)$ is called the $k$-th \emph{virtual Betti number} of $X$.
The \emph{ virtual Poincar\'{e} polynomial} of $X$ is defined to be
$$\widetilde{P}_X(t):=\sum_{k=0}^{2\dim_{\C} X} \beta^k(X)t^k,$$
which coincides to the usual Poincar\'{e} polynomial defined through the corresponding usual Betti numbers.

 Let $X$ be a (possible singular) connected  complex projective variety. It was shown in \cite{Bialynicki-Birula1} that if $X$ admits a $\C$-action with isolated fixed points,
 then $H^1(X,\Z)=0$. Furthermore, it was shown in \cite{Hu} that the virtual Hodge numbers $\tilde{h}^{p,0}(X)$, $\tilde{h}^{0,q}(X)$ vanish for all $q\neq 0$.
This implies that the there is no holomorphic form $p$-forms on $X$ if $X$ is smooth projective and has a holomorphic vector field $V$ such that $p>\dim zero(V)$(cf. \cite{Howard}).

Moreover, the virtual Betti nubmer $\tilde{\beta}^1(X)=0$.
   The following proposition says that $H_1(X,\Z)=0$,
which may be implied  in  literatures but to our knowledge it has  not been mentioned
explicitly elsewhere. By  the universal coefficient theorem,  it is equivalent to $H^1(X,\Z)=0$ and the torsion of  $H_1(X,\Z)$ is zero.

\begin{proposition}\label{Prop2.12}
Under the same assumption as Proposition \ref{Prop2.2},  we have $$ H_1(X,\Z) = 0.$$
\end{proposition}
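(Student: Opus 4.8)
The plan is to argue by induction on $n=\dim X$, in direct parallel with the proof of Proposition \ref{Prop2.2}. The base case $n=0$ is immediate: a connected projective variety of dimension zero is a single point, so $H_1=0$.

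For the inductive step I would reuse the geometry already set up in the proof of Proposition \ref{Prop2.2}. Choose a $\C$-invariant Zariski open set $U\subset X$ with $U\cong U'\times\C$, shrinking if necessary so that $U$ and $U'$ are non-singular, and put $Z:=X-U$, a closed $\C$-invariant subvariety with $\dim Z<n$. One then has to see that $Z$ again satisfies the hypotheses of the proposition. Its fixed locus is nonempty (the closure of any $\C$-orbit in the complete variety $Z$ is a $\C$-stable complete curve or point, hence meets $X^{\C}$), and $Z^{\C}\subseteq X^{\C}$, which is the single point $p_0$ since $X$ is connected with isolated fixed points (\cite[Cor.~1]{Bialynicki-Birula1}); so $Z^{\C}=\{p_0\}$ is isolated. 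Moreover $Z$ is connected, by exactly the Borel--Moore homology computation carried out in the proof of Proposition \ref{Prop2.2}. Hence the inductive hypothesis applies to $Z$ and yields $H_1(Z,\Z)=0$.

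Finally I would feed this into the localization long exact sequence in Borel--Moore homology for the closed subvariety $Z\subset X$ with open complement $U$. Since $X$ and $Z$ are compact, $H_k^{BM}(X,\Z)=H_k(X,\Z)$ and $H_k^{BM}(Z,\Z)=H_k(Z,\Z)$, so the relevant portion reads
$$\cdots\to H_1(Z,\Z)\to H_1(X,\Z)\to H_1^{BM}(U,\Z)\to H_0(Z,\Z)\to\cdots.$$
The group on the left vanishes by the inductive hypothesis. For the group on the right, homotopy invariance of Borel--Moore homology — equivalently, Poincar\'e duality together with homotopy invariance of ordinary homology, exactly as used for $U$ in the proof of Proposition \ref{Prop2.2} — gives
$$H_1^{BM}(U,\Z)=H_1^{BM}(U'\times\C,\Z)\cong H_{-1}^{BM}(U',\Z)=0.$$
Exactness then forces $H_1(X,\Z)=0$, which closes the induction.

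The only step calling for genuine verification is the claim that $Z$ inherits the hypotheses of the proposition — connectedness, and an isolated nonempty fixed locus — but all of this is already contained in, or immediate from, the proof of Proposition \ref{Prop2.2}. The rest is a two-term exactness argument, so I do not anticipate any serious obstacle; if anything, the mild care needed is to record that $U$ may be taken non-singular so that the Poincar\'e-duality computation of $H_1^{BM}(U,\Z)$ goes through verbatim.
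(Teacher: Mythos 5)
Your proof is correct and follows essentially the same route as the paper: the same decomposition $X=Z\sqcup U$ with $U\cong U'\times\C$, the Borel--Moore localization sequence, and induction on dimension, with the hypotheses for $Z$ (connectedness, isolated nonempty fixed locus) checked exactly as in the proof of Proposition \ref{Prop2.2}. The only difference is cosmetic: the paper splits $H_1$ into free and torsion parts, handling the free part via Bialynicki-Birula's $H^1$ result and applying induction only to the torsion, whereas you apply the induction hypothesis $H_1(Z,\Z)=0$ directly and use $H_1^{BM}(U,\Z)=0$ to squeeze $H_1(X,\Z)$ in the exact sequence, which is a slight streamlining of the same argument.
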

\begin{proof}
Using the notations as in Proposition \ref{Prop2.2}, we have the following long exact sequence of Borel-Moore homology
$$
        ...\to H_1^{BM}(U,\Z)\to H_0(Z,\Z)\to H_0(X,\Z)\to H_0^{BM}(U,\Z)\to 0.
$$
Since $H_0^{BM}(U,\Z)=0$ and $H_1^{BM}(U,\Z)=0$ (see the proof of Proposition \ref{Prop2.2}).
Hence the sequence reduces to the following long exact sequence
\begin{equation}\label{eqn2.10}
 ... \to H_2^{BM}(U,\Z)\to H_1(Z,\Z)\to H_1(X,\Z)\to 0.
\end{equation}

Note that
$$
\begin{array}{ccl}
H_2^{BM}(U,\Z)&=&H_2^{BM}(U'\times\C,\Z)\\
&\stackrel{P.D.}{\cong} &H_{2n-2}(U'\times\C,\Z)\\
&=&H_{2n-2}(U',\Z)\\
&=&\Z^m,
\end{array}
$$
where $m$ is the number of connected components of $U'$.

Since $H_1(Z,\Z)=F_1(Z)\oplus T_1(Z)$ and $H_1(X,\Z)=F_1(X)\oplus T_1(X)$, where $F_1(Z)$ (resp.  $T_1(Z)$) denotes
the free (resp. torsion) part of $H_1(Z,\Z)$.
From Bialynicki-Birula's result, one has  $F_1(Z)\cong F_1(X)=0$. These together with Equation \eqref{eqn2.10} yield
\begin{equation}\label{eqn2.11}
 ...\to \Z^m\to T_1(Z,\Z)\to T_1(X,\Z)\to 0.
\end{equation}
Hence $ T_1(Z,\Z)\to T_1(X,\Z)$ is surjective. By the induction hypothesis, one has $T_1(Z,\Z)=0$ and so $H_1(X,\Z)=T_1(X,\Z)=0$.
This completes the proof of the proposition.
\end{proof}

From the proof of Proposition \ref{Prop2.12},  we actually show that  if  $X$ is a (possible singular) connected  complex quasi-projective variety admitting
 a $\C$-action with isolated fixed points or even no fixed points, then $H_1^{BM}(X,\Z) =0$. More generally, we actually get the following result.
\begin{corollary}
Let $X$ be a complex quasi-projective variety with a $\C$-action, then we have $H_1^{BM}(X,\Z) \cong H_1^{BM}(X^{\C},\Z)$.
\end{corollary}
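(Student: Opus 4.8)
The plan is to imitate the proof of Proposition~\ref{Prop2.12}, inducting on $\dim X$; the only new feature is that the fixed locus $X^{\C}$ need not be isolated, so $H_1^{BM}(X^{\C},\Z)$ need not vanish. If the $\C$-action is trivial then $X = X^{\C}$ and there is nothing to prove; this also settles the base case $\dim X = 0$. Otherwise, by Bialynicki-Birula (\cite{Bialynicki-Birula1}) there is a non-empty $\C$-invariant Zariski open $U \subset X$ with $U \cong U' \times \C$, where $\C$ acts by translation on the second factor (so $U$ contains no fixed points), and we may take $Z := X - U$ to be a $\C$-invariant closed subvariety with $\dim Z < \dim X$ and $Z^{\C} = X^{\C}$; the factor $U'$ may be assumed smooth if convenient.

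I would then write out the Borel-Moore localization sequence for the triple $(X, Z, U)$,
$$
\cdots \to H_2^{BM}(X,\Z) \to H_2^{BM}(U,\Z) \xrightarrow{\partial} H_1^{BM}(Z,\Z) \to H_1^{BM}(X,\Z) \to H_1^{BM}(U,\Z) \to \cdots,
$$
and use homotopy invariance of Borel-Moore homology, $H_k^{BM}(U' \times \C,\Z) \cong H_{k-2}^{BM}(U',\Z)$. This gives $H_1^{BM}(U,\Z) \cong H_{-1}^{BM}(U',\Z) = 0$ and $H_2^{BM}(U,\Z) \cong H_0^{BM}(U',\Z)$, so the sequence reduces to
$$
H_0^{BM}(U',\Z) \xrightarrow{\partial} H_1^{BM}(Z,\Z) \to H_1^{BM}(X,\Z) \to 0 .
$$
By the inductive hypothesis applied to $Z$ (it carries a $\C$-action with $Z^{\C} = X^{\C}$), we have $H_1^{BM}(Z,\Z) \cong H_1^{BM}(X^{\C},\Z)$, so everything comes down to showing $\partial = 0$.

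This is the one genuinely non-formal point. The isomorphism $H_2^{BM}(U,\Z) \cong H_0^{BM}(U',\Z)$ is flat pullback along $U = U' \times \C \to U'$, and the cycle class map $\cZ_0(U') \to H_0^{BM}(U',\Z)$ is surjective — a standard property of cycle maps into Borel-Moore homology in degree zero (cf.\ \cite{Fulton}), transparent when $U'$ is smooth since then $H_0^{BM}(U')$ is freely generated by the point classes of its compact connected components. Hence every class in $H_2^{BM}(U,\Z)$ is the fundamental class of a $1$-cycle $W$ on $U$, and taking the Zariski closure $\bar W$ in $X$ yields a $1$-cycle on $X$ whose fundamental class restricts to $[W]$; thus $[W] \in \mathrm{im}\big(H_2^{BM}(X,\Z) \to H_2^{BM}(U,\Z)\big) = \ker \partial$. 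Therefore $\partial = 0$, and $H_1^{BM}(X,\Z) \cong H_1^{BM}(Z,\Z) \cong H_1^{BM}(X^{\C},\Z)$, which closes the induction.

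The vanishing of $\partial$ is the only step I expect to require care; the rest is the localization sequence together with homotopy invariance of $H_*^{BM}$. As an alternative one could bypass the induction and instead invoke the $A$-equivalence $X \sim X^{\C}$ recalled in Section~\ref{section2}: it then suffices to check that $H_1^{BM}(-,\Z)$ is unchanged under a simple $\C$-equivalence $X_1 \cong X_2' \hookrightarrow X_2$ with $X_2 - X_2' \cong Y \times \C$, which is precisely the localization-plus-cycle-class argument above applied to the pair $(X_2, X_2')$, after which one composes along a chain realizing $X \sim X^{\C}$.
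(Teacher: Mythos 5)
Your proof is correct, and while it runs on the same skeleton as the paper's (the paper offers no separate argument for this corollary, only the remark that it follows ``from the proof of Proposition \ref{Prop2.12}'', i.e.\ the localization sequence for $(X,Z,U)$ with $U\cong U'\times\C$ plus induction over invariant closed subsets), the decisive step is genuinely different and in fact supplies something the paper's argument does not. In Proposition \ref{Prop2.12} the fixed locus is isolated, so the inductive target $H_1(Z,\Z)$ vanishes and only the surjectivity of $H_1(Z,\Z)\to H_1(X,\Z)$ (coming from $H_1^{BM}(U,\Z)=0$) is needed; the paper then finishes with the free/torsion splitting and Bia{\l}ynicki-Birula's $H^1$-comparison, and never needs to control the connecting map. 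For the general statement, where $H_1^{BM}(X^{\C},\Z)$ need not vanish, that route only produces a surjection onto $H_1^{BM}(X,\Z)$, and one must show $\partial:H_2^{BM}(U,\Z)\to H_1^{BM}(Z,\Z)$ is zero — exactly the step you isolate. Your argument for it (that $H_2^{BM}(U,\Z)\cong H_0^{BM}(U',\Z)$ is generated by classes $[\{p\}\times\C]$ of algebraic $1$-cycles, each of which lies in the image of $H_2^{BM}(X,\Z)$ because $\{p\}\times\C$ is closed in $U$, so the fundamental class of its closure in $X$ restricts to it) is sound: fundamental classes of closed subvarieties exist in Borel--Moore homology for arbitrary, possibly singular $X$ and are compatible with restriction to open subsets, and the homotopy-invariance isomorphism is flat pullback, sending $[p]$ to $[\{p\}\times\C]$. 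This also lets you bypass the torsion/free analysis and the $H^1$ results entirely, which is what makes the argument work for quasi-projective $X$ and non-isolated $X^{\C}$. Two small points to tighten: the claim $\dim Z<\dim X$ can fail for reducible $X$ (the Bia{\l}ynicki-Birula open set may miss whole top-dimensional components), so run the induction as a Noetherian induction on invariant closed subvarieties — the same bookkeeping the paper elides in Propositions \ref{Prop2.2}, \ref{Prop2.4}, \ref{Prop2.12} and Corollary \ref{coro2.11}; and the generation of $H_0^{BM}(U',\Z)$ by point classes of compact components is cleanest after replacing $U$ by $U'_{\rm sm}\times\C$, which is still open, invariant and fixed-point free, as you indicate. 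The alternative route you sketch via $A$-equivalence is the same argument in different packaging, since checking invariance of $H_1^{BM}$ under a simple $\C$-equivalence is precisely the localization-plus-cycle-extension step.
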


\section{Chow groups of a holomorphic vector field with isolated zeroes}
\label{Section3}
In this section, $X$ is a nonsingular complex projective variety unless otherwise specified.
A holomorphic vector field $V$ on $X$ means that $V\in H^0(X,\mathcal{T}_X)$, where $\mathcal{T}_X$ is the
tangent sheaf of $X$.

If $X$ admits a holomorphic vector field with isolate zeroes, then  the 1-parameter group $G$ by the vector field
is   $(\C^*)^k\times (\C)^r$ for some $k\geq 0$ and $0\leq r\leq 1$ (see \cite{Lieberman1}).
If $r=0$, then $X$ admits a torus action with isolated fixed points and hence $X$ admits a cellular decomposition (see \cite{Bialynicki-Birula1})
and so $\Ch_p(X)\cong H_{2p}(X,\Z)$. If $r=1$, then we write $G_1=(\C^*)^{k-1}\times \C$ and so $G=G_1\times \C^*$. Then $X_1:=X^{\C^*}$ is
nonsingular and $Z:=(X_1)^{G_1}$ is the isolated zero set. Therefore,  Theorem \ref{Thm1} reduces to the following theorem.

\begin{theorem}\label{Th3.1}
Let $X$ be a smooth connected complex projective algebraic variety which admits a  $\C$-action  whose fixed point set $Z$ is isolated and nonempty. Then the cycle class map
$\Ch_p(X)\otimes \Q\to H_{2p}(X,\Q)$ is injective for all $p$.
\end{theorem}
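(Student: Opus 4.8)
The plan is to adapt the Bloch--Srinivas decomposition-of-the-diagonal technique. Since $X$ admits a $\C$-action with isolated nonempty fixed points, Proposition \ref{Prop2.2} gives $\Ch_0(X)\cong \Z$; in particular $\Ch_0(X)_{\Q}$ is ``trivial'' in the sense that every zero cycle of degree zero is rationally equivalent to zero. The first step is therefore to run the standard argument: because $\Ch_0(X)_{\Q}$ is supported on a point (indeed on the unique fixed point $p_0$), there is an integer $N$ and a decomposition
\begin{equation*}
N\cdot \Delta_X = Z_1 + Z_2 \in \Ch_n(X\times X),
\end{equation*}
where $Z_1$ is supported on $D\times X$ for some divisor $D\subsetneq X$ and $Z_2$ is supported on $X\times \{p_0\}$ (up to a cycle supported on $X\times W$ with $\dim W<\dim X$, but here $W$ can be taken to be a point). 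Here I would invoke the usual spreading-out/specialization argument from \cite{Bloch-Srinivas}: triviality of $\Ch_0$ of the generic fiber forces the diagonal, restricted to the generic point of the second factor, to be rationally equivalent to a constant cycle, and clearing denominators and taking closures yields the displayed identity over $\Q$.

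Next I would let this correspondence act on $\Ch_p(X)_{\Q}$. The component $Z_2 \sim X\times\{p_0\}$ acts as zero on $\Ch_p(X)_{\Q}$ for $p<n$ (it factors through $\Ch_p(\mathrm{pt})_{\Q}=0$ unless $p=0$, and for $p=0$ it records only the degree, which is detected by homology), so modulo the homologically-detected part we are left with the action of $Z_1$. Since $Z_1$ is supported on $D\times X$ with $j\colon D\hookrightarrow X$ a proper closed subvariety, its action on any class $\alpha\in\Ch_p(X)_{\Q}$ factors as $\alpha\mapsto j_*\big(\gamma\cdot p_D^*\alpha\big)$ for a suitable correspondence $\gamma$ on $\widetilde D\times X$, where $\widetilde D\to D$ is a resolution; hence $N\alpha$ lies in the image of $j_*\colon \Ch_p(\widetilde D)_{\Q}\to \Ch_p(X)_{\Q}$ modulo cycles coming from the point. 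The key point is then an induction on dimension: if $\alpha$ is in $\Ch_p(X)_{\Q,hom}$ (i.e. homologically trivial), I want to conclude $\alpha=0$.

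To set up the induction I would use the geometry specific to our situation rather than a blind resolution. The $\C$-action on $X$ with isolated fixed point $p_0$ restricts to a $\C$-action on $X_1=X^{\C^*}$ and on the Bialynicki-Birula strata; more usefully, the complement $X-U$ of the open set $U\cong U'\times\C$ appearing in Proposition \ref{Prop2.2} is a $\C$-invariant closed subvariety $Z'$ of strictly smaller dimension whose fixed locus is again $\{p_0\}$, and $U'$ (being an open subset of a $\C^*$-quotient-like object, or directly by the structure theory) itself carries compatible actions. I would arrange the divisor $D$ in the decomposition of the diagonal to be $\C$-invariant (possible since we may spread out equivariantly), pass to an equivariant resolution $\widetilde D$, and apply the inductive hypothesis of Theorem \ref{Th3.1} to the components of $\widetilde D$: each is a smooth projective variety of dimension $<n$ carrying a $\C$-action, but one must check the fixed locus stays isolated. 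When it does, $\Ch_p(\widetilde D)_{\Q}\hookrightarrow H_{2p}(\widetilde D,\Q)$, and a diagram chase with the cycle class map and the localization sequence (using that $j_*$ commutes with $cl_p$) forces $\alpha=0$.

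The main obstacle I anticipate is exactly this inductive bookkeeping: the divisor $D$ produced by the decomposition of the diagonal need not be smooth, need not be irreducible, and its resolution need not carry a $\C$-action with \emph{isolated} fixed points — the structure theorem only guarantees something like $A$-equivalence to the fixed locus, not preservation of isolatedness. So the real work is to either (i) choose $D$ carefully — e.g. as (the closure of) a union of non-open Bialynicki-Birula strata, which are themselves built from $\C^*$-fixed loci and hence amenable — so that the inductive hypothesis genuinely applies, or (ii) prove a slightly more general statement by induction, replacing ``isolated fixed points'' with a hypothesis stable under the operations performed (passing to $\C$-invariant subvarieties and to equivariant resolutions), for instance ``$\Ch_0$ of every $\C$-invariant subvariety is spanned by the fixed points''. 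I expect option (ii), combined with the $\C^*$-part of $G$ (which gives a genuine Bialynicki-Birula cell decomposition on $X_1=X^{\C^*}$, hence control of $\Ch_*(X_1)$), is the cleanest route, with the base case $n=0$ trivial and the base case ``$r=0$'' already handled by the cellular decomposition quoted before the statement.
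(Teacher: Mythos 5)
Your overall strategy (Bloch--Srinivas decomposition of the diagonal, action of correspondences, induction) is the same as the paper's, but the two points you yourself flag as ``the real work'' are precisely where the paper's proof lives, and your proposal leaves both open, so there is a genuine gap. First, equivariance of the decomposition: you assert that $D$ can be arranged $\C$-invariant ``since we may spread out equivariantly,'' but give no argument. The paper obtains invariance by applying the flow $\phi_t$ to the identity $\Delta_X=\Gamma_0+\Gamma^1$ and using the Harvey--Lawson finite-volume graph $\overline{\mathcal{T}}\subset\P^1\times X\times X$ to produce an explicit rational equivalence of $\Delta_X-\Gamma_0$ to a cycle supported on an invariant $D\times X$, with a separate treatment of the case where $\bigcap_{t\in\C}\phi_t(D)$ drops dimension. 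Second, and more seriously, your induction is on $\dim X$ and requires applying Theorem \ref{Th3.1} itself to an equivariant resolution $\widetilde D$, which would need the fixed locus of $\widetilde D$ to stay isolated; as you admit, nothing guarantees this, and neither of your options (i) nor (ii) is carried out. Moreover, even granting $\Ch_p(\widetilde D)_{\Q}\hookrightarrow H_{2p}(\widetilde D,\Q)$, your concluding ``diagram chase'' is not automatic: from $N\alpha=j_*\beta$ with $\alpha$ homologically trivial on $X$ you cannot conclude that $\beta$ is homologically trivial on $\widetilde D$, so injectivity downstairs does not directly force $\alpha=0$.

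The paper resolves both issues differently. Lemma \ref{lemma3.5} shows that a correspondence supported on $V\times X$ with $\dim V=v$ acts trivially on $\Ch_{n-v}(X)_{hom}$ as soon as $\Ch_0(V)_{hom}=0$ for the (possibly singular) subvariety $V$ \emph{itself} --- no resolution with a good action is ever needed --- and Proposition \ref{Prop2.2}, proved for possibly singular varieties, supplies $\Ch_0(V)\cong\Z$ for every $\C$-invariant $V$ containing the fixed point. With this, the induction runs not on $\dim X$ but on the codimension $p$ of the cycles: injectivity for $p\le s$ refines the diagonal (Proposition \ref{Prop3.3}) to $\Delta_X=\Gamma_0+\cdots+\Gamma_s+\Gamma^{s+1}$ with $\Gamma^{s+1}$ supported on an invariant $V_{n-s-1}\times X$; the pieces $\Gamma_p$ act trivially on $\Ch_{s+1}(X)_{hom}$ for dimension reasons (Proposition \ref{Prop3.4}), and $\Gamma^{s+1}$ acts trivially by Lemma \ref{lemma3.5}, yielding $\Ch_{s+1}(X)_{hom}\otimes\Q=0$. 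To turn your sketch into a proof you would need to supply exactly these two ingredients: an actual equivariance argument for the support of the decomposition, and a mechanism (such as Lemma \ref{lemma3.5}) that lets the induction proceed through singular invariant subvarieties without asserting isolated fixed points on their resolutions.
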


\subsection{The decomposition of Diagonal }
 According to Bloch and Srinivas,  the triviality of the Chow group of a projective variety $X$ gives rise to the
 decomposition of diagonal in $X\times X$.

\begin{proposition}[\cite{Bloch-Srinivas}]\label{Prop3.2}
 Let $ \Delta_X\subset X \times X$ be the diagonal. If $\Ch_0(X)\cong\Z$,  then there exists
an integer $N > 0$, a divisor $D \subset X$, and $n$-cycles $\Gamma_1$,  $\Gamma_2$ on $X \times X$ such that
$\supp(\Gamma_0) \subset X\times p_0$, $\supp (\Gamma^1) \subset D \times X$ and
$$\Delta_X = \Gamma_0 + \Gamma^1$$
in $\Ch^n(X \times X)\otimes \Q$.
\end{proposition}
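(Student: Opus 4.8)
The plan is to run the classical Bloch--Srinivas decomposition-of-the-diagonal argument, now available because Proposition~\ref{Prop2.2} (together with the reduction in Section~\ref{Section3}) gives us the hypothesis $\Ch_0(X)\cong\Z$. First I would pass to the generic point: let $\eta=\Spec\,\C(X)$ be the generic point of the second factor, and consider the restriction of the diagonal cycle $\Delta_X$ to $X_\eta:=X\times_{\C}\Spec\,\C(X)$. Its class in $\Ch_0(X_\eta)$ is precisely the class of the generic point, i.e.\ a zero-cycle of degree one. The key input is that $\Ch_0$ commutes with the filtered direct limit of localizations: $\Ch_0(X_{\C(X)})=\varinjlim_{U}\Ch_0(X\times U)$, the limit running over nonempty open $U\subseteq X$. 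Since $\Ch_0(X)\cong\Z$ after tensoring with $\Q$ (indeed integrally, but $\Q$ suffices), the degree map $\Ch_0(X_{\C(X)})\otimes\Q\to\Q$ is an isomorphism; hence the generic point and any fixed closed point $p_0\in X$, pulled back to $X_{\C(X)}$, have the same class rationally.

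Second, I would \emph{spread out} this rational equivalence. The equality $[\eta]=[p_0]$ in $\Ch_0(X_{\C(X)})\otimes\Q$ means: there is an integer $N>0$ so that $N\cdot[\eta]-N\cdot[p_0]$ is rationally trivial on $X_{\C(X)}$; such a rational equivalence is defined over some finitely generated subfield, hence over the function field of some nonempty open $U\subseteq X$, i.e.\ it holds in $\Ch_0(X\times U)\otimes\Q$. Writing $D:=X-U$, a divisor in $X$, the localization exact sequence
$$
\Ch_n((D\times X)\cup(X\times p_0))\otimes\Q \longrightarrow \Ch_n(X\times X)\otimes\Q \longrightarrow \Ch_n(X\times U)\otimes\Q \longrightarrow 0
$$
shows that $N\Delta_X - N(X\times p_0)$ comes from a cycle supported on $D\times X$. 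Dividing by $N$ in the $\Q$-coefficient Chow group and renaming, we obtain $n$-cycles $\Gamma_0$ with $\supp(\Gamma_0)\subset X\times p_0$ and $\Gamma^1$ with $\supp(\Gamma^1)\subset D\times X$ such that $\Delta_X=\Gamma_0+\Gamma^1$ in $\Ch^n(X\times X)\otimes\Q$, which is the assertion.

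The only genuine subtlety — and the step I expect to be the main obstacle to write cleanly — is the interplay between the continuity of $\Ch_0$ under direct limits of open subschemes and the localization sequence used to spread out; one must be careful that ``defined over the function field of some $U$'' is exactly what the limit formula $\Ch_0(X_{\C(X)})=\varinjlim_U\Ch_0(X\times U)$ delivers, and that the resulting identity, a priori in $\Ch_n(X\times U)\otimes\Q$, lifts to $\Ch_n(X\times X)\otimes\Q$ modulo the image of cycles on the closed complement $(D\times X)\cup(X\times p_0)$. Everything else is formal manipulation of the localization sequence and of degrees of zero-cycles. Since this is verbatim the argument of Bloch--Srinivas \cite{Bloch-Srinivas}, I would simply cite it, indicating that the hypothesis $\Ch_0(X)\cong\Z$ needed there is supplied in our situation by Proposition~\ref{Prop2.2}.
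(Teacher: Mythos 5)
The paper gives no argument for this proposition at all --- it is stated with the citation [BS] --- and your route (restrict $\Delta_X$ to the generic fibre of the second projection, identify its class there with that of $p_0$ up to torsion, spread out, and conclude with the localization sequence for the closed set $(D\times X)\cup(X\times p_0)$) is exactly the argument of that reference, so ending by citing Bloch--Srinivas is consistent with what the paper does. If, however, the sketch is meant to stand as a proof, the step you present as immediate is precisely the non-formal heart of Bloch--Srinivas: the claim that $\Ch_0(X)\cong\Z$ forces the degree map $\Ch_0(X_{\C(X)})\otimes\Q\to\Q$ to be an isomorphism does \emph{not} follow from the limit formula $\Ch^n(X_{\C(X)})=\varinjlim_U \Ch^n(X\times U)$ (note also the indexing slip: the terms of the colimit are $\Ch^n(X\times U)=\Ch_n(X\times U)$, not $\Ch_0(X\times U)$). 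The hypothesis only tells you that the Gysin restriction of $\Delta_X-X\times p_0$ to each closed fibre $X\times\{u\}$ vanishes in $\Ch_0(X)$; passing from vanishing on all closed fibres to torsion-vanishing on the generic fibre requires the specialization/countability argument of Bloch--Srinivas (the idea credited to Colliot-Th\'el\`ene): the locus of parameters $u$ for which the class is killed by a fixed integer $N$ is a countable union of closed subsets, parametrized via Chow or Hilbert schemes of rational equivalences, and the uncountability of $\C$ (the universal-domain property) yields a uniform $N$ and lets one lift the equivalence to the generic fibre. Equivalently, what is really needed is $\Ch_0(X_\Omega)\otimes\Q\cong\Q$ for an algebraically closed field $\Omega\supset\C(X)$, and this is not a formal consequence of $\Ch_0(X)\cong\Z$ over $\C$.

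Relatedly, the subtlety you single out as the main obstacle --- the compatibility between the colimit description of $\Ch_0(X_{\C(X)})$ and the localization sequence used to spread out --- is in fact the formal and unproblematic part: once $N[\eta]-N[p_0]$ dies in the colimit, it dies in some $\Ch_n(X\times U)\otimes\Q$, and the exactness of $\Ch_n((D\times X)\cup(X\times p_0))\otimes\Q\to\Ch_n(X\times X)\otimes\Q\to\Ch_n(X\times U)\otimes\Q\to 0$ with $D=X-U$ gives the decomposition at once. So your write-up locates the difficulty in the wrong place; as a citation-backed statement it matches the paper, but as a self-contained proof it has a genuine gap at the generic-fibre step.
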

There many variants and applications of this technique, including the  generalizations given by Paranjape \cite{Paranjape},  Laterveer \cite{Laterveer}, etc.
\begin{proposition}\label{Prop3.3}
Let $X$ be a smooth projective variety.
Assume that for $p\leq s$, the maps
$$cl: \Ch_p(X)\otimes {\mathbb{Q}}\rightarrow H^{2n-2p}(X,{\mathbb{Q}})$$
are injective. Then there exists a decomposition
$$\Delta_X= \Gamma_{0}+\cdots+\Gamma_{s}+\Gamma^{s+1}\in {\rm CH}^n(X\times X)\otimes {\mathbb{Q}},$$
where $\Gamma_{p}$ is supported in $V_{n-p}\times W_{p}$,
$p=0,\cdots, s$ with $\dim V_{n-p}=n-p$ and $\dim W_{p}=p$, and
$\beta$ is supported in $V_{n-s-1}\times X$.
\end{proposition}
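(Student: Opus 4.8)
The plan is to run the Bloch--Srinivas decomposition-of-diagonal argument inductively, peeling off one ``horizontal'' piece at a time. We start from Proposition \ref{Prop3.2}, which already gives the case $s=0$: since $\mathrm{Ch}_0(X)\cong\Z$ (this is where the hypothesis for $p=0$ is used, via the injectivity of $cl$ and the fact that $H_0(X,\Q)=\Q$ for $X$ connected), we have $\Delta_X=\Gamma_0+\Gamma^1$ with $\Gamma_0$ supported on $X\times p_0$ and $\Gamma^1$ supported on $D\times X$ for a divisor $D$. The inductive hypothesis will be that we have produced $\Delta_X=\Gamma_0+\cdots+\Gamma_{t-1}+\Gamma^{t}$ in $\mathrm{CH}^n(X\times X)\otimes\Q$ with $\Gamma_p$ supported in $V_{n-p}\times W_p$ for $p<t$ and $\Gamma^{t}$ supported in $V_{n-t}\times X$, where $V_{n-t}$ is a subvariety of dimension $n-t$ (a resolution of which I will call $\widetilde V$, so $\dim\widetilde V=n-t$). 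I then want to split $\Gamma^t$ into a piece supported over a $t$-dimensional subvariety of the second factor plus a new horizontal piece over a subvariety of dimension $n-t-1$.

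The mechanism for the inductive step: let $j\colon\widetilde V\to X$ be (a resolution of) $V_{n-t}$, so $\Gamma^{t}$ is the pushforward under $j\times\mathrm{id}_X$ of a cycle $\widetilde\Gamma\in\mathrm{CH}^{n-?}(\widetilde V\times X)\otimes\Q$; more precisely $\widetilde\Gamma$ is a cycle of dimension $n$ on $\widetilde V\times X$, hence of codimension $n-t$ there. Viewing $\widetilde\Gamma$ as a family of $t$-cycles on $X$ parametrised by $\widetilde V$, consider its class in the generic fibre: restricting to the generic point $\eta$ of $\widetilde V$ gives a class $\widetilde\Gamma_\eta\in\mathrm{CH}_t(X_{\C(\widetilde V)})\otimes\Q$. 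The key point is that the hypothesis ``$cl\colon\mathrm{Ch}_t(X)\otimes\Q\to H^{2n-2t}(X,\Q)$ is injective'' should force this generic fibre class to be defined over a proper closed subset — concretely, one uses a Hilbert-scheme / spreading-out argument together with the injectivity to conclude that $\widetilde\Gamma_\eta$ coincides, after base change, with a fixed cycle pulled back from $X$, i.e. that over a dense open $U\subset\widetilde V$ the restriction $\widetilde\Gamma|_{U\times X}$ agrees (in $\mathrm{CH}\otimes\Q$) with $\mathrm{pr}_U^*(W_t)$ for some $t$-cycle $W_t$ on $X$, perhaps after shrinking. This produces $\Gamma_t$: take its support in $V_{n-t}\times W_t$. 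The difference $\Gamma^{t}-\Gamma_t$ then restricts to zero on $U\times X$, hence by the localization sequence for Chow groups of $(\widetilde V\times X,\ (\widetilde V\setminus U)\times X)$ it is supported on $(\widetilde V\setminus U)\times X$; pushing forward to $X\times X$ and taking $V_{n-t-1}$ to be the image of $\widetilde V\setminus U$ (which has dimension $\le n-t-1$) gives the new horizontal term $\Gamma^{t+1}$. Iterating up to $t=s$ yields the statement.

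I should flag that the proposition as stated has a couple of cosmetic slips that the proof should silently correct: the last cycle is called $\Gamma^{s+1}$ in the display but referred to as $\beta$ in the sentence after, and the codimension notation $H^{2n-2p}$ presumes $\dim X=n$ (consistent with $\mathrm{CH}^n(X\times X)$). I will keep the notation $\Gamma^{s+1}$ throughout and note $\dim V_{n-s-1}=n-s-1$.

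The main obstacle is the ``generic fibre is constant'' step. The naive statement is false over $\C$ without tensoring with $\Q$ and without the injectivity hypothesis; what makes it work is precisely that injectivity of the cycle class map in degree $t$, applied not to $X$ but to $X$ base-changed to the function field $\C(\widetilde V)$ — so one really needs a version of the hypothesis that propagates to fields of finite transcendence degree. The standard way around this, going back to Bloch--Srinivas and systematized by Paranjape and Laterveer (cited just before the proposition), is a spreading-out / continuity argument: a cycle on $X_{\C(\widetilde V)}$ extends to a cycle on $\widetilde V'\times X$ for some alteration $\widetilde V'$, its pointwise fibres over a very general (hence, by a countability argument, over a dense open) point of $\widetilde V'$ are homologically trivial exactly when the generic one is, and then the hypothesised injectivity kills it rationally on those fibres; a Hilbert-scheme decomposition of $\widetilde V'$ into countably many locally closed pieces on which the cycle class is locally constant lets one upgrade ``very general'' to ``dense open.'' Getting this argument to run cleanly — in particular handling the passage through resolutions/alterations and keeping track of the bookkeeping of supports and dimensions — is the real content; everything else is formal manipulation with the localization sequence and pushforward in the Chow groups.
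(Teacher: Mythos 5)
Your proposal is not ``a different route from the paper'' so much as a reconstruction of the proof the paper leaves to the literature: Proposition \ref{Prop3.3} is stated here without proof, as the known generalization of the Bloch--Srinivas decomposition due to Paranjape \cite{Paranjape} and Laterveer \cite{Laterveer} (see also Voisin \cite{Voisin}), and those sources argue by exactly the induction you outline --- lift $\Gamma^{t}$ to a resolution $\widetilde V$ of $V_{n-t}$, restrict to the generic point, use the injectivity hypothesis together with a spreading-out/countability argument to split off a piece supported over a $t$-dimensional subvariety of the second factor, and push the remainder into $(\widetilde V\setminus U)\times X$. So the skeleton, the localization bookkeeping and the dimension count for $V_{n-t-1}$ are the standard ones. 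Two points should be made explicit to close the sketch. First, your claim that $\widetilde\Gamma|_{U\times X}$ agrees in $\Ch\otimes\Q$ with $\mathrm{pr}_U^*(W_t)$ for \emph{some} fixed $t$-cycle $W_t$ on $X$ currently has no candidate $W_t$: the missing observation is that for closed points $y$ in a fixed connected component of $\widetilde V$ the Gysin fibres $\widetilde\Gamma_y\in \Ch_t(X)$ all have the same class in $H^{2n-2t}(X,\Q)$ (K\"unneth), so the hypothesis --- which is only given over $\C$ --- identifies them all with a single class $w_t\in \Ch_t(X)\otimes\Q$, and it is to $\widetilde\Gamma-\mathrm{pr}_X^*\,w_t$, whose closed fibres are rationally trivial, that the Hilbert-scheme/countability lemma is applied to trivialize the generic fibre in $\Ch_t(X_{\C(\widetilde V)})\otimes\Q$. (Alternatively, the weaker statement that all fibres are rationally supported on one fixed $t$-dimensional $W_t$ --- the union of supports of a finite $\Q$-basis of $\Ch_t(X)\otimes\Q$, which is finite dimensional by injectivity --- already yields the support condition $\supp(\Gamma_t)\subset V_{n-t}\times W_t$ demanded by the proposition.) Second, in the base case the hypothesis only gives $\Ch_0(X)\otimes\Q\cong\Q$ (componentwise), not $\Ch_0(X)\cong\Z$ as in the quoted form of Proposition \ref{Prop3.2}; this is harmless because the decomposition is taken with $\Q$-coefficients (the integral statement would require Roitman's theorem), but it should be said. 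With these insertions your argument is correct and is precisely the proof the paper implicitly invokes.
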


\begin{proposition}[cf. \cite{Laterveer}, \cite{Peters}]\label{Prop3.4}\label{Prop3..4}
Assume that $X$
and $Y$ are smooth projective varieties and let $\alpha\subset
X\times Y$ be  an irreducible cycle of dimension $\dim(X)=n$,
supported on $V\times W$, where, $V\subset X$ is a subvariety of
dimension $v$ and $W\subset Y$ a subvariety of dimension $w$. Let
$\tilde{V}$ , resp. $\tilde{W}$ be a resolution of singularities of
$V$, resp. $W$ and let $\tilde{i}:\tilde{V}\rightarrow X$ and
$\tilde{j}:\tilde{W}\rightarrow Y$ be the corresponding morphisms. With
$\tilde{\alpha}\subset \tilde{V}\times \tilde{W}$ the proper
transform of $\alpha$ and $p_1$, resp. $p_2$ the projections from
$X\times Y$ to the first. resp. the second factor, there is a
commutative diagram

$$
\begin{array}{cccc}
  \Ch_{p-n+v+w}(\tilde{V}\times \tilde{W}) &
   \stackrel{\tilde{\alpha}_*}{\longrightarrow} & \Ch_p(\tilde{V}\times \tilde{W})   \\

 \uparrow p_1^*&      & \downarrow  (p_2)_*    \\

 \Ch_{p-n+v}(\tilde{V})&   & \Ch_p(\tilde{W})   \\

 \uparrow \tilde{i}^*&   & \downarrow \tilde{j}_*   \\

  \Ch_p(X) & \stackrel{{\alpha}_*}{\longrightarrow} & \Ch_p(Y).

\end{array}
$$
Here $\tilde{i}^*$ is induced by the Gysin homomorphism, $p_1^*$ is the
flat pull-back, and $(p_2)_*$  and $\tilde{j}_*$ come from proper push
forward. In particular, $\alpha_*=0$ if $p<n-v$ or if $p>w$.
Moreover, $\alpha_{n-v}$ acts trivially on $\Ch_{n-v}(X)_{hom}$ if $\Ch_0(\widetilde{V})_{hom}=0$,
while $\alpha_w$ acts trivially on $\Ch_{w}(X)_{hom}$.

 \end{proposition}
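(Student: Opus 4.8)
The plan is to reduce everything to Chow-theoretic bookkeeping on the resolutions $\tilde V\times\tilde W$, for which the only input beyond the definitions is the standard functorial formalism of \cite{Fulton}. Recall that, $X\times Y$ being smooth, the correspondence $\alpha$ acts by $\alpha_*(z)=(p_2)_*\big(p_1^*z\cdot\alpha\big)$, where $p_1^*$ is flat pull-back and $\cdot$ is the refined intersection product (supported near $V\times W$). Write $g:=\tilde i\times\tilde j:\tilde V\times\tilde W\to X\times Y$ and let $\tilde p_1,\tilde p_2$ be the two projections of $\tilde V\times\tilde W$. The first step is to arrange, by choosing the resolutions to be isomorphisms over the smooth loci of $V$ and $W$ (and, if necessary, replacing $V,W$ by $\overline{p_1(\alpha)},\overline{p_2(\alpha)}$ so that $\alpha$ dominates both), that the proper transform satisfies $g_*\tilde\alpha=\alpha$ in $\Ch_n(X\times Y)$.

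Granting this, commutativity of the diagram is a three-line computation. The projection formula for the proper lci morphism $g$ gives $p_1^*z\cdot g_*\tilde\alpha=g_*\big(\tilde\alpha\cdot g^*p_1^*z\big)$; functoriality of Gysin pull-back applied to $p_1\circ g=\tilde i\circ\tilde p_1$ gives $g^*p_1^*z=\tilde p_1^*(\tilde i^*z)$; and functoriality of proper push-forward applied to $p_2\circ g=\tilde j\circ\tilde p_2$ gives $(p_2)_*\circ g_*=\tilde j_*\circ(\tilde p_2)_*$. Assembling,
\[
\alpha_*(z)=(p_2)_*\big(p_1^*z\cdot\alpha\big)=\tilde j_*(\tilde p_2)_*\big(\tilde\alpha\cdot\tilde p_1^*(\tilde i^*z)\big)=\tilde j_*\circ(p_2)_*\circ\tilde\alpha_*\circ p_1^*\circ\tilde i^*(z),
\]
which is precisely the composite around the square. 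All operations involved also commute with the cycle class map (again \cite{Fulton}), so the corresponding diagram of (Borel--Moore) homology groups commutes as well — this is what the last two assertions use.

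The vanishing statements are then read off from the indexing. The first arrow $\tilde i^*$ takes values in $\Ch_{p-n+v}(\tilde V)$, which vanishes for $p<n-v$, while the last arrow $\tilde j_*$ is defined on $\Ch_p(\tilde W)$, which vanishes for $p>w=\dim\tilde W$; in either range $\alpha_*=0$. For the refined assertions: if $z\in\Ch_{n-v}(X)_{hom}$ then $\tilde i^*z\in\Ch_0(\tilde V)$ is homologically trivial, hence $\tilde i^*z=0$ under the hypothesis $\Ch_0(\tilde V)_{hom}=0$, so $\alpha_{n-v}(z)=0$. And if $z\in\Ch_w(X)_{hom}$, then the image $(p_2)_*\tilde\alpha_*p_1^*\tilde i^*z$ lies in $\Ch_w(\tilde W)_{hom}$, which is $0$ because $\tilde W$ is smooth projective of dimension $w$ so that $\Ch_w(\tilde W)\hookrightarrow H_{2w}(\tilde W)$; hence $\alpha_w(z)=\tilde j_*(0)=0$.

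The one point that genuinely needs care is the identity $g_*\tilde\alpha=\alpha$ used at the outset: it can fail if the generic point of $\alpha$ is swallowed by the exceptional locus of $g$, which is exactly why one first reduces to the case where $\alpha$ dominates $V$ and $W$ and takes resolutions that restrict to the identity over the smooth loci. Everything after that reduction is formal.
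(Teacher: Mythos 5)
Your proposal is correct, but it is organized differently from the paper: the paper does not prove the commutative square at all — it simply cites Laterveer (Thm.~1.7) for the Chow-group diagram and Peters (Prop.~12) for the Lawson-homology analogue, and then reads off the two vanishing statements and the two triviality statements by exactly the dimension count and the $\Ch_t(Z)_{hom}=0$ for $t=\dim Z$ argument that you also use. What you add is a self-contained derivation of the square: writing $\alpha_*(z)=(p_2)_*(p_1^*z\cdot\alpha)$, arranging $g_*\tilde\alpha=\alpha$ for $g=\tilde i\times\tilde j$, and then applying the projection formula together with functoriality of Gysin pull-back ($g^*p_1^*=\tilde p_1^*\tilde i^*$) and of proper push-forward ($(p_2)_*g_*=\tilde j_*(\tilde p_2)_*$); this is in substance the argument behind Laterveer's theorem, so your route buys independence from the citation at the cost of the normalization step. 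That step is the only place needing care, and you identify it correctly: after replacing $V,W$ by the closures of the projections of $\alpha$ (so that $\alpha$ meets the locus where the resolutions are isomorphisms, whence $\tilde\alpha\to\alpha$ is birational and $g_*\tilde\alpha=\alpha$), one should also remark that the conclusions are unaffected, because if the dimensions drop the ``in particular'' vanishing already kills $\alpha_*$ on the relevant groups, so the hypotheses $\Ch_0(\tilde V)_{hom}=0$, $\dim\tilde W=w$ are only invoked when $V,W$ are unchanged; spelling this out would close the one small gap you leave implicit. Your appeal to compatibility of all four operations with the cycle class map, needed so that homologically trivial classes stay homologically trivial along the left and top of the diagram, is the same (tacit) ingredient the paper uses for its last two assertions.
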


\begin{proof}
For the proof of the commutative diagram for Chow groups,  it was shown in \cite[Thm. 1.7]{Laterveer}.
The statement in this proposition  is the analogue for Lawson homology given in the Proposition 12 in \cite{Peters}.

The one but last assertion follows from the fact that $\Ch_{m-n+v}(\widetilde{V}) = 0$
for $m-n+v < 0$ and $\Ch_m(\widetilde{W}) = 0$ if $m > w$. The final assertion follows
from the fact that for all varieties $Z$, one has $\Ch_t (Z)_{hom} = 0 $ for $t = \dim Z$,
while also $\Ch_0 (\widetilde{V})_{hom} = 0$ by assumption.
\end{proof}

\begin{lemma} \label{lemma3.5}
Under the same assumption in Proposition \ref{Prop3.4},  we obtain that  $\alpha_{n-v}$ acts trivially on $\Ch_{n-v}(X)_{hom}$ if $\Ch_0( {V})_{hom}=0$.
\end{lemma}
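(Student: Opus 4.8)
The plan is to reduce Lemma \ref{lemma3.5} to the last assertion of Proposition \ref{Prop3.4} by comparing the Chow groups of $V$ and of a resolution $\widetilde V$. The only hypothesis that differs between the two statements is that we are given $\Ch_0(V)_{hom}=0$ rather than $\Ch_0(\widetilde V)_{hom}=0$, so it suffices to prove that the former implies the latter; once that is done, Proposition \ref{Prop3.4} applies verbatim and gives that $\alpha_{n-v}$ acts trivially on $\Ch_{n-v}(X)_{hom}$.

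First I would recall that a resolution of singularities $\pi\colon\widetilde V\to V$ is a proper birational morphism, hence induces a proper pushforward $\pi_*\colon\Ch_0(\widetilde V)\to\Ch_0(V)$ which is compatible with the cycle class maps to Borel--Moore homology, so it restricts to a map $\pi_*\colon\Ch_0(\widetilde V)_{hom}\to\Ch_0(V)_{hom}$. The key classical input is that this pushforward on zero-cycles is \emph{surjective} (indeed $\pi_*\colon\Ch_0(\widetilde V)\to\Ch_0(V)$ is surjective for any proper surjective $\pi$, since a closed point of the smooth locus of $V$ lifts uniquely and points in the singular locus can be moved, by a moving-type argument or simply because $\pi$ is surjective on points and $\Ch_0$ is generated by closed points). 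Therefore $\pi_*\colon\Ch_0(\widetilde V)_{hom}\to\Ch_0(V)_{hom}=0$ is surjective, which by itself is not enough; I need $\Ch_0(\widetilde V)_{hom}=0$, i.e. I need a section or an injectivity statement.

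The clean way around this is to invoke the birational invariance of $\Ch_0$ for smooth projective varieties, or more precisely: if $V$ is a projective variety with $\Ch_0(V)\cong\Z$ (equivalently $\Ch_0(V)_{hom}=0$, since $V$ may be assumed connected), then $\Ch_0(\widetilde V)\cong\Z$ as well. This follows because $\pi_*$ is surjective and $\deg\colon\Ch_0(\widetilde V)\to\Z$ factors through $\deg\colon\Ch_0(V)\to\Z$ via $\pi_*$ (degree is preserved by proper pushforward), so $\ker(\deg)=\Ch_0(\widetilde V)_{hom}$ maps onto $\Ch_0(V)_{hom}=0$; to get injectivity of $\deg$ on $\Ch_0(\widetilde V)$ one uses that $\widetilde V$ is smooth and that the decomposition-of-diagonal machinery (Proposition \ref{Prop3.2}) applied on $\widetilde V$ — whose $\Ch_0$ is a quotient-compatible image of $\Ch_0(V)$ — forces $\Ch_0(\widetilde V)_{hom}\otimes\Q=0$; since we work with $\Q$-coefficients throughout (the target is $\Ch_{n-v}(X)_{hom}$ inside $\Ch_{n-v}(X)\otimes\Q$), rational triviality is all that is needed. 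Alternatively, one can simply cite that $\Ch_0$ of a smooth projective variety is a birational invariant, so $\Ch_0(\widetilde V)_{hom}=0$ holds iff it holds for any smooth projective birational model, and the hypothesis on $V$ propagates through a common resolution.

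I expect the main obstacle to be pinning down precisely which coefficients and which resolution are used: if $V$ is singular one cannot directly say $\Ch_0(\widetilde V)_{hom}=0$ from $\Ch_0(V)_{hom}=0$ over $\Z$ without the pushforward being an isomorphism, so the argument really does rely on passing to $\Q$-coefficients (where Proposition \ref{Prop3.2}'s decomposition of the diagonal gives the vanishing of $\Ch_0(\widetilde V)_{hom}\otimes\Q$ from $\Ch_0(\widetilde V)\otimes\Q$ being generated by the class of a point, which in turn follows from surjectivity of $\pi_*\otimes\Q$ together with $\Ch_0(V)\otimes\Q\cong\Q$). Once $\Ch_0(\widetilde V)_{hom}\otimes\Q=0$ is established, the final assertion of Proposition \ref{Prop3.4} yields immediately that $\alpha_{n-v}$ acts trivially on $\Ch_{n-v}(X)_{hom}$, completing the proof.
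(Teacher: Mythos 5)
There is a genuine gap: your whole reduction rests on the claim that $\Ch_0(V)_{hom}=0$ forces $\Ch_0(\widetilde V)_{hom}\otimes\Q=0$, and that implication is false. Surjectivity of $\pi_*:\Ch_0(\widetilde V)\to\Ch_0(V)$ goes the wrong way (it makes $\Ch_0(V)$ a quotient of $\Ch_0(\widetilde V)$, not the reverse, so nothing about generation of $\Ch_0(\widetilde V)\otimes\Q$ by a point follows), birational invariance of $\Ch_0$ is only available between \emph{smooth} projective varieties and so cannot be applied to the singular $V$, and applying Proposition \ref{Prop3.2} on $\widetilde V$ presupposes exactly the statement $\Ch_0(\widetilde V)\cong\Z$ that you are trying to prove. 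A concrete counterexample: let $V$ be the projective cone over a smooth curve $C$ of genus $g\geq 1$. Every point of $V$ lies on a ruling line through the vertex, so $\Ch_0(V)\cong\Z$ and $\Ch_0(V)_{hom}=0$; but the resolution $\widetilde V$ (blow up the vertex) is a $\P^1$-bundle over $C$, so $\Ch_0(\widetilde V)_{hom}\cong \Ch_0(C)_{hom}\cong \mathrm{Jac}(C)(\C)$, which is nonzero even after tensoring with $\Q$. (The paper itself points to this cone phenomenon in a remark following the proof of Theorem \ref{Th3.1}.) So the hypothesis on $V$ simply does not propagate to $\widetilde V$, and the last assertion of Proposition \ref{Prop3.4} cannot be invoked the way you propose.

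The paper's proof avoids this entirely by never asserting anything about $\Ch_0(\widetilde V)$. Instead it shows that the map through which the correspondence acts, namely the Gysin pullback $\tilde i^{\,*}=(i\circ\sigma)^*:\Ch^{v}(X)\to\Ch^{v}(\widetilde V)=\Ch_0(\widetilde V)$ (with $k=v=\dim V$, $p=n-v$), \emph{factors through} $\Ch^{v}(V)=\Ch_0(V)$: one constructs $i^*:\Ch^k(X)\to\Ch^k(V)$ using that the graph of $i$ is a local complete intersection in $V\times X$ because $X$ is smooth, and $\sigma^*:\Ch^k(V)\to\Ch^k(\widetilde V)$ using that $\widetilde V$ is smooth, and checks $(i\circ\sigma)^*=\sigma^*\circ i^*$. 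Then the action of $\alpha_{n-v}$ on $\Ch_{n-v}(X)_{hom}$ is the composite of a map into $\Ch_0(V)_{hom}=0$ with further maps, hence trivial, regardless of whether $\Ch_0(\widetilde V)_{hom}$ vanishes. If you want to salvage your write-up, this factorization (not a comparison of $\Ch_0(V)$ with $\Ch_0(\widetilde V)$) is the missing idea.
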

\begin{proof}
First of all, it is enough to show that for the morphism $\tilde{i}=i\circ\sigma:\widetilde{V}\stackrel{\sigma}{\to} V\stackrel{i}{\hookrightarrow} X$, all the
pullback maps $\sigma^*:\Ch^k(V)\to \Ch^k(\widetilde{V})$,  $i^*:\Ch^k(X)\to \Ch^k(V)$ and $\tilde{i}^*=(i\circ\sigma)^*:\Ch^k(X)\to \Ch^k(\widetilde{V})$ are defined
and hence $(i\circ\sigma)^*=\sigma^*\circ i^*$. Once these are proved, then the map $\Ch^k(X)\to \Ch^k(\widetilde{V})$ factors through $\Ch^k(V)$ and one
gets the triviality of the action  $\alpha_{n-v}$ on $\Ch_{n-v}(X)_{hom}$ under the assumption $\Ch_0( {V})_{hom}=0$. To see that $i^*$ is well-defined,
we note that $X$ is smooth, so the graph of $i$ is $\Gamma_i\stackrel{j}{\hookrightarrow} V\times X$, which  is a locally complete intersection in $V\times X$ and the projection $pr_X:V\times X\to X$ is flat. Hence
$i^*$ is defined to be the composition $\Ch^k(X)\stackrel{pr_X^*}{\to}\Ch^k(V\times X)\stackrel{j^*}\to \Ch^k(\Gamma_i) =\Ch(V)$ (see \cite[p.258]{Voisin}).
To see that $\sigma^*$ is well-defined, we consider the factorization of $\sigma$ as the composition $\widetilde{V}\to \Gamma_{\sigma}\stackrel{i_{\sigma}}{\hookrightarrow} \widetilde{V}\times V\stackrel{pr_V}{\to} V$,
where the first map is the isomorphism  of $V$ to the graph of $\sigma$, $i_{\sigma}$ is the inclusion of the graph into the product $\widetilde{V}\times V$ and $pr_V$ is the projection to $V$.
Since $\widetilde{V}$ is smooth, $pr_V$ is a smooth map. Hence $i_{\sigma}$ is a regular embedding and we have a well-defined pullback map on Chow groups $\Ch^k(\widetilde{V}\times V)\to \Ch^k(\Gamma_{\sigma})$ (see \cite[B.7.6]{Fulton}). Now
$\sigma^*:\Ch^k(V)\to \Ch^k(\widetilde{V})$ is the composition $ \Ch^k(V)\stackrel{pr_V^*}{\to} \Ch^k(\widetilde{V}\times V )\stackrel{i_{\sigma}^*}{\to} \Ch^k(\Gamma_{\sigma})\to \Ch^k(\widetilde{V})$.
Since both $\sigma^*$ and $ i^*$ are well-defined, the equality  $(i\circ\sigma)^*=\sigma^*\circ i^*$ follows directly from their definitions.
Therefore, we have  the following commutative diagram

$$
\xymatrix{ \Ch^k(X) \ar[r]^{i^*} \ar@{=}[d] & \Ch^k(V) \ar[d]^{\sigma^*} \\
    \Ch^k(X)\ar[r]^{(i\circ \sigma)^*}& \Ch^k(\widetilde{V}),
}
$$
where $k=\dim (V)$.
\end{proof}


Now we can give a proof of  Theorem \ref{Th3.1}.
\begin{proof}[Proof of Theorem \ref{Th3.1}]
By hypothesis, the fixed point set $X^{\C}$ is isolated and by Corollary 1 in  \cite{Bialynicki-Birula1}, $X^{\C}$ is  connected and so  it is
exactly one point and we denote it by $p_0$.
 By Proposition \ref{Prop2.2}, we have $\Ch_0(X)\cong \Z$. By Bloch-Srinivas' Proposition \ref{Prop3.2}, we have
\begin{equation}\label{eq3.4}
\Delta_X = \Gamma_0 + \Gamma^1 \in \Ch^n(X\times X)\otimes \Q,
\end{equation}
 where $\supp (\Gamma_0)=X \times p_0$, $p_0$ is the fixed point and $\Gamma^1\subset D\times X$. Since $X$ admits $\C$-action, denoted by $\C\times X\to X, (t,x)\mapsto\phi_t(x)$,
$X\times X$ carries the induced action $\C\times X\times X\to X\times X$ by $(t, (x,y)) \mapsto (\phi_t(x),\phi_t(y))$. Note that both the diagonal $\Delta_X\subset X\times X$  and $ X\times p_0$ carry the induced
$\C$-action by restriction. By applying $\phi_t$ on Equation \eqref{eq3.4}, one gets  $\Gamma^1$ is $\C$-invariant and carries the induced $\C$-action.

To see this, one note that the flow $\phi_t$ induced from the $\C$-action  generates a finite volume complex graph $\mathcal{T}$ in the sense of Harvey and Lawson (see \cite[\S9]{Harvey-Lawson}), where
$$
\mathcal{T}:=\{(t,\phi_t(x),x)\in \C\times X\times X|t\in \C  ~{\rm and}~ x\in X \}\subset \P^1\times X
\times X
$$
 and its closure $\overline{\mathcal{T}}\subset  \P^1\times X \times X$ is a projective variety.   This $\overline{\mathcal{T}}$
 gives a rational equivalence of $\Delta_X-\Gamma_0$ to an algebraic cycle  supported  in $D\times X$ since the limit points of non-fixed
  points supported in proper subvarieties, where $D$ is a divisor of $X$. The algebraic cycle we denote by $\Gamma^1$. We have the similar method to choose $\Gamma^i$ for $i>1$ below.

This means $\Gamma^1=\phi_t(\Gamma^1)\subset\phi_t(D)$ for all $t\in\C$ and so $\Gamma^1\subset \cap_{t\in \C}\phi_t(D)$.
Therefore there are two possible cases: Either $D$ is $\C$-invariant divisor or $D'=\cap_{t\in \C}\phi_t(D)$ is a $\C$-invariant subscheme of lower dimension. In the first case, we can write $D=\sum a_iZ_i$,
where each $Z_i$ is an irreducible variety of codimension one, while in the second case $D'=\sum a_iZ_i'$, where $codim (Z_i')\geq 1$. The invariance of $D$ or $D'$ implies that each $Z_i$ or $Z_i'$  is $\C$-invariant.
 By   taking $p=1,v=n-1$ in Proposition \ref{Prop3..4} and Lemma \ref{lemma3.5}, one gets the surjectivity of
  the map  $\Ch_0(D')_{hom}\otimes\Q\to \Ch_{1}(X)_{hom}\otimes \Q$. If $codim (Z_i')> 1$, then $\Ch^{n-1}(Z_i')=0$.
This implies that  the action of $\Gamma_*$ on $\Ch_1(X)=\Ch^{n-1}(X)$ is zero if the support of the cycle $\Gamma$ is in $ Z_i'\times X$.
 Therefore, we only need to consider such a $\Gamma$  whose support in $Z_i'\times X$ with $codim(Z_i')=1$ and we can assume that $D'=D$ is an invariant divisor, i.e. a scheme  pure of codimension 1 in $X$.
 We write $Z_i'$ as $Z_i$ below.
 Since $Z_i$ admits $\C$-action, we have $\Ch_0(Z_i)\cong \Z$ by Proposition \ref{Prop2.2}.
Again by  Proposition \ref{Prop3..4} and Lemma \ref{lemma3.5},  the map $\Ch_{0}(\sum Z_i)_{hom}\otimes\Q\to \Ch_{1}(X)_{hom}\otimes\Q$ is surjective. Therefore  $\Ch_1(X)_{hom}\otimes \Q=0$.
Since $\Ch_1(X)_{hom}\otimes \Q=0$, there exists a scheme $W_{1}$ of dimension $1$ such that $\Ch_1(W_1)\otimes\Q \twoheadrightarrow \Ch_1(X)\otimes {\Q}$ is surjective.  Note that $W_1$ can be chosen as $\C$-invariant.
Now  $\Ch_0(X)_{hom}\otimes \Q=0$ and $\Ch_1(X)_{hom}\otimes \Q$ imply that the diagonal $\Delta_X$ can be written as
\begin{equation}\label{eq3.6}
\Delta_X = \Gamma_0 + \Gamma_1+ \Gamma^2\in \Ch^n(X\times X)\otimes \Q,
\end{equation}
where $\Gamma^1=\Gamma_1+\Gamma^2$, $\supp(\Gamma_1)\subset V_{n-1}\times W_{1}$ and $\supp(\Gamma^2)\subset V_{n-2}\times X$.  The cycle $\Gamma_1$ is  $\C$-invariant
since it is the restriction of  $\Gamma^1$ on $X\times W_1$, where $\Gamma^1$ is $\C$-invariant. Hence $\Gamma^2=\Gamma^1-\Gamma_1$ is also $\C$-invariant.

As above, $\supp(\Gamma^2)\subset V_{n-2}\times X$ and $\Gamma^2$ is $\C$-invariant. This implies that $V_{n-2}$ is $\C$-invariant. Therefore, $V_{n-2}$ admits a $\C$-action with $V_{n-2}^{\C}=p_0$.
By Proposition \ref{Prop2.2}, $\Ch_0(V_{n-2})_{hom}=0$. By Proposition \ref{Prop3..4} and Lemma \ref{lemma3.5}, $\Ch_{0}(V_{n-2})_{hom}\otimes\Q\to\Ch_2(X)_{hom}\otimes\Q$ is surjective. Hence $\Ch_2(X)_{hom}\otimes\Q=0$.
By induction, we can continue this procedure such that $\Ch_p(X)_{hom}\otimes \Q=0$ for all $p=0,1,...,n=\dim X$.
Therefore, $\Ch_p(X)\otimes \Q\to H_{2p}(X,\Q)$ is injective for all $p$. This completes the proof of the theorem.
\end{proof}

\begin{remark}
Contrary to the nonsingular case, if $X$ is a singular irreducible projective variety admitting a $\C$-action with isolated fixed points, the dimension of $\Ch_1(X)_{hom}\otimes\Q$ as a rational vector space can be infinite and hence $\Ch_1(X)\otimes \Q\to H_{2}(X,\Q)$ is not injective any more.
For example, if $X$ is a cone over a smooth projective variety $Y$ with a nonzero geometric genus.
\end{remark}

\begin{remark}
Comparing to the case that $X$ is a smooth projective variety admitting a $\C$-action, one can get a stronger structure theorem than Theorem \ref{Th3.1} for $X$ if it admits a $\C^*$-action with isolated fixed points (see \cite{Bialynicki-Birula2}).

Note that it is well known that a smooth projective variety with a non-trivial $\C$-action will always admitted a non-trivial action of $\C^*$.
However, it seems hard to find a smooth projective variety with $\C$-action with isolated fixed points without a action of $\C^*$ with isolated fixed point. Most
of the known examples are of $G/P$ type, where $G$ is an algebraic group and $P$ is a parabolic subgroup.
Konarski has found a few examples of such smooth projective varieties in $\P^n$ defined quadratic polynomials (\cite{Konarski2}).
 One of the varieties admit both a $\C$ and  $\C^*$-action  with isolated points, with the same homological group with $\P^5$
 but carries different topological type.
He also pointed out there may exist more examples using higher degree polynomials.

One the other hand, it is a basic open problem in this area whether there exists a smooth variety with a $\C$-action with isolated points but it does not admit any $\C^*$-action isolated fixed points (cf. \cite{Carrell}).
Moreover, a conjecture of Carrell says that \emph{a smooth protective variety that admits a holomorphic vector field with exactly one zero is rational}.
If the Carrell conjecture fails, then any of such counterexamples  is  the example of the above type.

\end{remark}

The next subsection says that Carrell conjecture cannot be disproved through comparing invariants of
a rational oriented cohomological theory (cf. \cite{Panin}). More precisely,
let $X$ be  a $\C$-action with isolated points, the rational coefficients oriented cohomological groups of $X$ coincide with some
smooth rational projective variety $Y$.

\subsection{Applications}

As the first application, we get the following result on  Hodge numbers.
\begin{corollary}[\cite{Carrell-Lieberman}]\label{Cor3.09}
Under the assumption of Theorem  \ref{Thm1}, we have $h^{p,q}(X)=0$ if $p\neq q$.
\end{corollary}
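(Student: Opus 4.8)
\emph{Proof proposal.} The plan is to read off the vanishing of the off-diagonal Hodge numbers from the \emph{complete} decomposition of the diagonal that is produced inside the proof of Theorem~\ref{Th3.1}. That proof establishes $\Ch_p(X)_{hom}\otimes\Q=0$ for every $p$, which, exactly as in Proposition~\ref{Prop3.3}, yields a decomposition
$$\Delta_X=\Gamma_0+\Gamma_1+\cdots+\Gamma_n\in\Ch^n(X\times X)\otimes\Q,$$
where $\supp(\Gamma_q)\subset V_{n-q}\times W_q$ with $\dim V_{n-q}=n-q$ and $\dim W_q=q$. So the first step is simply to record this decomposition; no new work is needed beyond what the proof of Theorem~\ref{Th3.1} already contains.

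The second step is to analyze the action of each $\Gamma_q$ on $H_\ast(X,\Q)$. Using the homological analogue of the factorization through resolutions $\widetilde V_{n-q}$, $\widetilde W_q$ from Proposition~\ref{Prop3..4} (this is the classical variant of the argument of \cite{Bloch-Srinivas}; see also \cite{Voisin}), the action $(\Gamma_q)_\ast$ on $H_k(X,\Q)$ factors through $H_k(\widetilde W_q,\Q)$ on the push-forward leg, which vanishes for $k>2q$ since $\dim\widetilde W_q=q$, and through $H^{2n-k}(\widetilde V_{n-q},\Q)$ on the pull-back leg (here one uses that $X$ is smooth, as in Lemma~\ref{lemma3.5}), which vanishes for $2n-k>2(n-q)$, i.e. for $k<2q$. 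Hence $(\Gamma_q)_\ast=0$ on $H_k(X,\Q)$ unless $k=2q$. Since $\sum_q(\Gamma_q)_\ast=(\Delta_X)_\ast=\mathrm{id}$ on $H_k(X,\Q)$, we conclude $H_k(X,\Q)=0$ for all odd $k$, while $(\Gamma_q)_\ast=\mathrm{id}$ on $H_{2q}(X,\Q)$.

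The third step extracts the Hodge-theoretic statement. From $H_{2k+1}(X,\Q)=0$ we get $H^{2k+1}(X,\C)=0$, so $h^{p,q}(X)=0$ whenever $p+q$ is odd; in particular for all such $p\neq q$. For the even degrees, observe that $(\Gamma_q)_\ast$ on $H_{2q}(X,\Q)$ factors through $H_{2q}(\widetilde W_q,\Q)$, which, $\widetilde W_q$ being smooth projective of dimension $q$, is spanned by the fundamental classes of its irreducible components; pushing these forward to $X$ shows that $H_{2q}(X,\Q)$ is generated by classes of $q$-dimensional algebraic subvarieties. By Poincar\'e duality $H^{2(n-q)}(X,\Q)$ is generated by classes of codimension-$(n-q)$ subvarieties, which are of Hodge type $(n-q,n-q)$; hence $H^{2(n-q)}(X,\C)=H^{n-q,\,n-q}(X)$, i.e. $h^{a,b}(X)=0$ for $a+b=2(n-q)$, $a\neq b$. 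Letting $q$ run over $0,\dots,n$ exhausts all even degrees, and combined with the odd case this gives $h^{p,q}(X)=0$ for all $p\neq q$. This recovers, in the case $\dim Z=0$, the Carrell--Lieberman bound $h^{p,q}(X)=0$ for $|p-q|>\dim Z$ of \cite{Carrell-Lieberman}.

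I expect the main obstacle to be the second step: one must carefully transport the correspondence-action diagram of Proposition~\ref{Prop3..4} (and the well-definedness discussion of Lemma~\ref{lemma3.5}) from Chow groups to singular (co)homology, so as to pin down the two vanishing ranges — in particular, to justify that the pull-back leg genuinely factors through $H^\ast(\widetilde V_{n-q})$ despite $V_{n-q}$ being possibly singular, and to account for the possible reducibility of $W_q$. By contrast, the Hodge-theoretic input of the third step (algebraic classes are Hodge classes, together with Poincar\'e duality and the Hodge decomposition of even-degree cohomology) is entirely standard.
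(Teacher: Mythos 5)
Your argument is correct and is exactly the route the paper takes: the paper's proof of Corollary \ref{Cor3.09} simply cites the classical Bloch--Srinivas diagonal-decomposition method and its generalization (Laterveer, Th.~1.9), which is precisely the full decomposition $\Delta_X=\Gamma_0+\cdots+\Gamma_n$ plus the correspondence-action-on-cohomology computation that you spell out. The only difference is that you write out the standard details (vanishing ranges for each $\Gamma_q$, algebraicity of $H_{2q}(X,\Q)$, and the Hodge-type conclusion) which the paper leaves to the cited references.
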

\begin{proof}
This is the classical application of Theorem \ref{Thm1} and  the Bloch-Srinivas diagonal decomposition method and its generalization (see \cite[Th.1.9]{Laterveer}).
\end{proof}

Note that the injectivity of $\Ch_p(X)_{\Q}\to H_{2p}(X,\Q)$ for all $0\leq p\leq \dim X$ implies the surjectivity of $\Ch_p(X)_{\Q}\to H_{2p}(X,\Q)$, since
the Hodge conjecture holds in this case and any element $\alpha\in H_{2p}(X,\Q)\cong H^{2n-2p}(X,\Q)$ is of $(n-p,n-p)$-form by Corollary \ref{Cor3.09}. Hence the cycle class map
is an isomorphism after tensoring the coefficients.
\begin{corollary}\label{Cor3.10}
Under the assumption of Theorem  \ref{Thm1}, we have the isomorphism
 $\Ch_p(X)_{\Q}\cong H_{2p}(X,\Q)$ for all $p\geq 0$.
\end{corollary}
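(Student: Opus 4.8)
The plan is to reduce the statement to surjectivity of the cycle class map and then derive that surjectivity from the Hodge conjecture for $X$. By Theorem \ref{Thm1}, which reduces to Theorem \ref{Th3.1}, the map $cl_p\colon\Ch_p(X)_{\Q}\to H_{2p}(X,\Q)$ is injective for every $p$, so it is enough to show it is onto. Since this injectivity holds for \emph{all} $p$ with $0\le p\le n=\dim X$ (equivalently, via Poincar\'e duality, for the maps $\Ch_p(X)_{\Q}\to H^{2n-2p}(X,\Q)$), Proposition \ref{Prop3.3} applies with $s=n$; the residual term is then supported on a variety of dimension $-1$, hence is zero, and we obtain a complete decomposition of the diagonal $\Delta_X=\Gamma_0+\cdots+\Gamma_n$ in $\Ch^n(X\times X)_{\Q}$, with $\Gamma_p$ supported on $V_{n-p}\times W_p$, $\dim V_{n-p}=n-p$, $\dim W_p=p$.

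Next I would use Corollary \ref{Cor3.09}: since $h^{p,q}(X)=0$ for $p\ne q$, the Hodge structure on $H^{2k}(X,\Q)$ is pure of type $(k,k)$ for every $k$, and consequently every class in $H^{2(n-p)}(X,\Q)\cong H_{2p}(X,\Q)$ is a Hodge class. Thus surjectivity of $cl_p$ for all $p$ is exactly the assertion that the Hodge conjecture holds for $X$, which in turn is the standard output of the complete decomposition of the diagonal produced above: letting each correspondence $\Gamma_q$ act on $H^{2(n-p)}(X,\Q)$ and using its support in $V_{n-q}\times W_q$ — the cohomological counterpart of the Chow-group diagram in Proposition \ref{Prop3.4} — one writes a Hodge class as $[\Delta_X]_*\alpha=\sum_q[\Gamma_q]_*\alpha$ and expresses each summand through algebraic cycles on resolutions of the $W_q$; this is contained in \cite[Th.~1.9]{Laterveer}, already invoked in the proof of Corollary \ref{Cor3.09}. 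Granting surjectivity, $cl_p$ is bijective by Theorem \ref{Thm1}, which yields the isomorphism $\Ch_p(X)_{\Q}\cong H_{2p}(X,\Q)$.

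The only genuinely non-formal point is the implication \emph{complete diagonal decomposition $\Rightarrow$ Hodge conjecture in every degree}: in low codimension this is the classical Bloch--Srinivas consequence, but to cover all $p$ one needs Laterveer's uniform version, and the care there lies in tracking the degree shifts of the correspondence actions $[\Gamma_q]_*$ and organizing the induction on $\dim X$ so that each lower-dimensional piece $W_q$ contributes only algebraic classes. Everything else is bookkeeping with the injectivity already established and with Poincar\'e duality.
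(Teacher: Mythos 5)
Your proposal is correct and follows essentially the same route as the paper: injectivity from Theorem \ref{Thm1}, the vanishing $h^{p,q}(X)=0$ for $p\neq q$ from Corollary \ref{Cor3.09} so that every class in $H_{2p}(X,\Q)$ is a Hodge class, and the validity of the Hodge conjecture for $X$ to get surjectivity, hence bijectivity. The only difference is one of explicitness: the paper simply asserts that the Hodge conjecture holds in this situation, while you derive it from the full diagonal decomposition (Proposition \ref{Prop3.3} with $s=n$) via Laterveer's theorem, which is exactly the machinery the paper is implicitly invoking.
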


\begin{corollary}\label{Cor3.11}
Under the assumption of Theorem  \ref{Thm1}, we have $$\Phi_{p,k}:L_pH_k(X)_{\Q}\cong H_k(X,\Q)$$ for $k\geq 2p\geq 0$.
\end{corollary}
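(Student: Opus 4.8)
The plan is to run the argument of Theorem~\ref{Th3.1} once more, with Lawson homology $L_pH_\bullet$ in place of Chow groups and with Proposition~\ref{Prop3.4} replaced by its Lawson-homology version (Peters, \cite[Prop.~12]{Peters}), the new ingredient being the $s$-operation $s\colon L_pH_k(X)\to L_{p-1}H_k(X)$ with $\Phi_{p,k}=s^p$. First I reduce the assertion to the vanishing $L_pH_k(X)_{hom}\otimes\Q=0$ for all $k\ge 2p$. The proof of Theorem~\ref{Th3.1} actually gives $\Ch_p(X)_{hom}\otimes\Q=0$ for every $p$; hence ${\rm Griff}_p(X)_\Q=0$, algebraic and rational equivalence of $p$-cycles coincide rationally, and Friedlander's isomorphism $L_pH_{2p}(X)\cong A_p(X)$ (\cite{Friedlander1}) combined with Corollary~\ref{Cor3.10} gives $L_pH_{2p}(X)_\Q=A_p(X)_\Q=\Ch_p(X)_\Q\cong H_{2p}(X,\Q)$, under which $\Phi_{p,2p,\Q}$ is the cycle class map, hence an isomorphism. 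For $k$ odd, $H_k(X,\Q)=0$ by Corollary~\ref{Cor3.09}, so injectivity of $\Phi_{p,k,\Q}$ already forces both sides to be zero. For $k$ even with $k>2p$, once all $\Phi_{q,k,\Q}$ ($q\le k/2$) are known to be injective, each $s\colon L_qH_k(X)_\Q\to L_{q-1}H_k(X)_\Q$ ($1\le q\le k/2$) is injective; since their composite $L_{k/2}H_k(X)_\Q\to L_0H_k(X)_\Q=H_k(X,\Q)$ equals the isomorphism $\Phi_{k/2,k,\Q}$, and a composite of injective linear maps that is bijective forces each factor to be bijective (the spaces being finite-dimensional), all these $s$-maps are isomorphisms and so is $\Phi_{p,k,\Q}=s^p$.

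It remains to prove $L_pH_k(X)_{hom}\otimes\Q=0$, which I would do by downward induction on $p$ (the range $p>k/2$ being vacuous since then $L_pH_k(X)=0$). When $r=0$, $X$ is cellular (\cite{Bialynicki-Birula2}) and the statement is the known agreement of Lawson homology with singular homology for linear varieties. When $r=1$, let $\beta\in L_pH_k(X)_{hom}\otimes\Q$. As in the proof of Theorem~\ref{Th3.1}, Proposition~\ref{Prop3.2}, Proposition~\ref{Prop3.3} and the Harvey--Lawson complex-graph construction (\cite[\S9]{Harvey-Lawson}) furnish a decomposition $\Delta_X=\Gamma_0+\cdots+\Gamma_n$ in $\Ch^n(X\times X)\otimes\Q$ with each $\Gamma_j$ $\C$-invariant and supported on $V_{n-j}\times W_j$, where $V_{n-j}$ is $\C$-invariant of dimension $n-j$ (so $\Ch_0(V_{n-j})_{hom}=0$ by Proposition~\ref{Prop2.2}) and $\dim W_j=j$. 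Because the Lawson-homology action of a cycle is not invariant under rational equivalence, $\sum_j\Gamma_j$ need not act as $\Delta_X$ does; but the rationally trivial discrepancy $\delta=\Delta_X-\sum_j\Gamma_j$ acts through the $s$-operation (cf.\ \cite{Peters}), say $\delta_*\beta=s(\gamma)$ with $\gamma\in L_{p+1}H_k(X)_\Q$, and naturality of $\Phi$ gives $\Phi_{p+1,k}(\gamma)=\Phi_{p,k}(s\gamma)=\Phi_{p,k}(\delta_*\beta)=\delta^{\mathrm{hom}}_*\Phi_{p,k}(\beta)=0$ since the homology correspondence of $\delta$ vanishes; by the inductive hypothesis $\gamma\in L_{p+1}H_k(X)_{hom}\otimes\Q=0$, so $\delta_*\beta=0$ and $\beta=\sum_j(\Gamma_j)_*\beta$. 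Finally, the Lawson-homology form of Proposition~\ref{Prop3.4} (\cite[Prop.~12]{Peters}) together with Lemma~\ref{lemma3.5} and $\Ch_0(V_{n-j})_{hom}=0$ shows each $(\Gamma_j)_*$ annihilates $L_pH_k(X)_{hom}\otimes\Q$; hence $\beta=0$. Combined with the reduction above, $\Phi_{p,k,\Q}$ is an isomorphism for all $k\ge 2p$.

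The main obstacle is precisely this non-invariance of the Lawson-homology correspondence action under rational equivalence: the Bloch--Srinivas decomposition of $\Delta_X$ does not by itself split the identity of $L_pH_k(X)$, so one cannot simply quote the Chow argument of Theorem~\ref{Th3.1}. One must instead carry the explicit $\C$-invariant representatives $\Gamma_j$ through Peters' Proposition~12 and pin down the discrepancy $\delta_*$, showing that it is absorbed by the $s$-operation so that the downward induction on $p$ goes through. A secondary point to verify is that the triviality of $(\Gamma_j)_*$ on the ``homologous-to-zero'' part requires no input beyond $\Ch_0(V_{n-j})_{hom}=0$, so that one is never forced to know Corollary~\ref{Cor3.11} for the singular (and possibly non-isolated-fixed-point) equivariant resolutions of the $V_{n-j}$.
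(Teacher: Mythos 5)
The surjectivity half of your proposal is the paper's own argument: odd $k$ is handled by Corollary \ref{Cor3.09}, and for $k=2m$ one uses that $\Phi_{m,2m,\Q}$ is an isomorphism (Friedlander's $L_mH_{2m}\cong A_m$ together with Corollary \ref{Cor3.10}) and the factorization $\Phi_{m,2m}=\Phi_{p,2m}\circ s^{m-p}$. Note that here you need neither injectivity of the intermediate $s$-maps nor finite-dimensionality: surjectivity of the composite already forces surjectivity of $\Phi_{p,2m,\Q}$, which combined with injectivity gives the isomorphism; this is exactly how the paper concludes.

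The genuine problem is in your injectivity half, precisely at the point you yourself single out as the ``main obstacle''. Your premise that the Lawson-homology action of a correspondence is not invariant under rational equivalence is mistaken, and the device you introduce to compensate --- that the rationally trivial discrepancy $\delta=\Delta_X-\sum_j\Gamma_j$ ``acts through the $s$-operation'', $\delta_*\beta=s(\gamma)$ with $\gamma\in L_{p+1}H_k(X)_\Q$ --- is not a statement proved in \cite{Peters} or in this paper, and your downward induction on $p$ stands or falls with it; as written this step is unjustified. The fact that makes the Bloch--Srinivas decomposition usable in Lawson homology at all, and which Peters establishes, is that for smooth projective varieties the action of a correspondence on $L_pH_k$ depends only on its class modulo algebraic equivalence: the correspondence acts through its class in $L_nH_{2n}(X\times X)\cong A_n(X\times X)$ (equivalently, a correspondence algebraically equivalent to zero lies in a connected algebraic family, which induces homotopic maps of cycle spaces and hence equal maps on homotopy groups). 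Since rational equivalence implies algebraic equivalence, $\delta_*=0$ outright, so $\beta=\sum_j(\Gamma_j)_*\beta$ with no induction on $p$ needed; what is left is exactly Peters' argument (his Proposition 12, the Lawson analogue of Proposition \ref{Prop3.4}, with Lemma \ref{lemma3.5} and $\Ch_0(V_{n-j})_{hom}=0$ --- your closing remark that no knowledge of Corollary \ref{Cor3.11} for the $V_{n-j}$ is required is correct). This is the content the paper imports by citing \cite{Peters} for the injectivity of $\Phi_{p,k,\Q}$ given $\Ch_p(X)_{hom}\otimes\Q=0$ from Theorem \ref{Th3.1}. So the statement and your overall outline are sound, but the crux of your injectivity argument rests on a false premise and an uncitable lemma; replacing it by the invariance of the correspondence action under algebraic equivalence repairs the proof and collapses it to the one in the paper.
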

\begin{proof}
First, we show $\Phi_{p,k}$ is injective for $k\geq 2p\geq 0$. By  Theorem \ref{Thm1}, we have $\Ch_p(X)_{hom}\otimes \Q=0$. This together with the action of diagonal on Lawson homology with rational coefficients, we obtain that the cycle map $\Phi_{p,k}:L_pH_k(X)_{\Q}\to H_k(X,\Q)$ is injective for $k\geq 2p\geq 0$ (see \cite{Peters}).
Now we show $\Phi_{p,k}$ is injective for $k\geq 2p\geq 0$. For $k$ odd, then $H_k(X,\Q)=0$ and so $L_pH_k(X)_{\Q}=0$.
Hence $ L_pH_k(X)_{\Q}\cong H_k(X,\Q)=0$. For $k$ even, set $k=2m$. In this case, we have $2p\leq k=2m$ and so $p\leq m$.
By Corollary \ref{Cor3.10},  we see that  the cycle class map  $\Phi_{m,2m}: L_mH_{2m}(X)_{\Q}\to  H_{2m}(X,\Q)$ is an isomorphism. Moreover,   the map $\Phi_{m,2m}$ factors through $\Phi_{p,2m}:L_pH_{2m}(X)_{\Q}\to  H_{2m}(X,\Q)$
for $0\leq p\leq m$ (see \cite{Friedlander-Mazur}). Therefore, $\Phi_{p,2m}: L_pH_{2m}(X)_{\Q}\to  H_{2m}(X,\Q)$ is surjective and hence $\Phi_{p,k}:L_pH_k(X)_{\Q}\to H_k(X,\Q)$ is surjective  for $k\geq 2p\geq 0$.
This completes the proof of the corollary.
\end{proof}

\begin{remark}
  Under the assumption of Theorem  \ref{Thm1}, we have shown in Proposition \ref{Prop2.2} and Corollary \ref{coro2.11} that
  $\Phi_{0}:\Ch_0(X)\cong H_{0}(X,\Z)\cong \Z$ and $\Phi_{1,k}:L_1H_k(X)\cong H_k(X,\Z)$ for $k\geq 2\geq 0$. However,
  it is still unknown in general whether $\Phi_{p}:\Ch_p(X)\cong H_{2p}(X,\Z)$ for all $p\geq 1$
  and  $\Phi_{p,k}:L_pH_k(X)\cong H_k(X,\Z)$ for $k\geq 2p\geq 4$.
\end{remark}

\begin{remark}
Corollary \ref{Cor3.10} and \ref{Cor3.11} is  a general principle that if $\Ch^*(X\times X)_{hom}\otimes \Q=0$, then all
the rational oriented (co)homology theory of $X$ are isomorphic to the singular homology theory of $X$.
Note the Chow theory and Lawson homology are examples of oriented cohomology theory. Other examples of oriented cohomology theory
are  complex cobordism, complex K-theory, and Morava K-theories(cf. \cite{Panin}).
\end{remark}

Let $X$ be a smooth complex projective variety.
It was shown in \cite[\S 7]{Friedlander-Mazur} that the subspaces
$T_pH_k(X,{\mathbb{Q}})$ form a decreasing filtration (called the \emph{topological filtration}):
$$\cdots\subseteq T_pH_k(X,{\mathbb{Q}})\subseteq T_{p-1}H_k(X,{\mathbb{Q}})
\subseteq\cdots\subseteq
T_0H_k(X,{\mathbb{Q}})=H_k(X,{\mathbb{Q}})$$ and
$T_pH_k(X,{\mathbb{Q}})$ vanishes if $2p>k$.

Denote by
$G_pH_k(X,{\mathbb{Q}})\subseteq H_k(X,{\mathbb{Q}})$ the
$\mathbb{Q}$-vector subspace of $H_k(X,{\mathbb{Q}})$ generated by
the images of mappings $H_k(Y,{\mathbb{Q}})\rightarrow
H_k(X,{\mathbb{Q}})$, induced from all morphisms $Y\rightarrow X$ of
varieties of dimension $\leq k-p$.

The subspaces $G_pH_k(X,{\mathbb{Q}})$ also form a decreasing
filtration (called the \emph{geometric filtration}):
$$\cdots\subseteq G_pH_k(X,{\mathbb{Q}})\subseteq G_{p-1}H_k(X,{\mathbb{Q}})
\subseteq\cdots\subseteq G_0H_k(X,{\mathbb{Q}})\subseteq
H_k(X,{\mathbb{Q}})$$

 Denote by $\tilde{F}_pH_k(X,{\Q})\subseteq
H_k(X,{\Q})$  the maximal sub-(Mixed) Hodge structure of span
$k-2p$. (See \cite{Grothendieck} and \cite{Friedlander-Mazur}.) The sub-${\Q}$ vector spaces
$\tilde{F}_pH_k(X,{\Q})$ form a decreasing filtration of
sub-Hodge structures:
$$\cdots\subseteq \tilde{F}_pH_k(X,{\Q})\subseteq \tilde{F}_{p-1}H_k(X,{\Q})
\subseteq\cdots\subseteq \tilde{F}_0H_k(X,{\Q})\subseteq H_k(X,{\Q})$$ and $\tilde{F}_pH_k(X,{\Q})$ vanishes if $2p>k$. This
filtration is called the \emph{Hodge filtration}.

It was shown by Friedlander and Mazur that
\begin{equation}
T_pH_k(X,{\mathbb{Q}})\subseteq G_pH_k(X,{\mathbb{Q}})\subseteq \tilde{F}_pH_k(X,{\Q})
\end{equation}
holds for any smooth projective variety $X$ and $k\geq 2p\geq0$.

Friedlander and Mazur proposed the following conjecture which relates Lawson homology theory to the central problems in the algebraic cycle theory.
\begin{conjecture}[Friedlander-Mazur \cite{Friedlander-Mazur} ,Grothendieck \cite{Grothendieck}]
For any smooth projective variety $X$ and $k\geq 2p\geq 0$, one has
$$
T_pH_k(X,{\mathbb{Q}})= G_pH_k(X,{\mathbb{Q}})=\tilde{F}_pH_k(X,{\Q}).
$$
\end{conjecture}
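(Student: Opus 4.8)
The displayed equalities constitute the Friedlander--Mazur conjecture, together with its Grothendieck reformulation for the Hodge filtration; this is a well-known open problem, so what follows is a plan of attack rather than a complete argument. Since the inclusions $T_pH_k(X,\Q)\subseteq G_pH_k(X,\Q)\subseteq\tilde F_pH_k(X,\Q)$ hold for every smooth projective $X$ and all $k\geq 2p\geq0$, the plan is to establish the two reverse inclusions independently. The first, $\tilde F_pH_k(X,\Q)\subseteq G_pH_k(X,\Q)$, is exactly the Generalized Hodge Conjecture for $X$ in the range determined by the pair $(k,p)$, once $\tilde F_pH_k(X,\Q)$ is identified (via Poincar\'e duality) with the maximal sub-Hodge structure of the appropriate coniveau. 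The second, $G_pH_k(X,\Q)\subseteq T_pH_k(X,\Q)$, asserts that a class whose niveau support has dimension $\leq k-p$ is in fact carried by an algebraic family of $p$-cycles on $X$.

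For the second inclusion I would first reduce, by resolution of singularities, to classes of the form $f_*\beta$ with $f\colon W\to X$ a morphism from a smooth projective variety $W$ of dimension $d\leq k-p$ and $\beta\in H_k(W,\Q)$. Because the Friedlander--Mazur cycle map $\Phi_{p,k}$ is natural with respect to proper push-forward of $p$-cycles, $f_*$ carries $T_pH_k(W,\Q)$ into $T_pH_k(X,\Q)$, so it suffices to prove that $H_k(W,\Q)=T_pH_k(W,\Q)$ whenever $\dim W\leq k-p$; note that in this range the geometric filtration on $W$ is automatically trivial, since one may take $W\to W$ itself as a test morphism. Dualizing on $W$ converts this into an algebraicity statement for part of $H^{2d-k}(W,\Q)$, and the natural tool is a Lawson-homology analogue of the Bloch--Srinivas decomposition of the diagonal, used as in Proposition \ref{Prop3.3}, Proposition \ref{Prop3.4} and Lemma \ref{lemma3.5}: one seeks a decomposition of $\Delta_W$ in $\Ch^d(W\times W)\otimes\Q$ whose correspondences are supported on products of low-dimensional subvarieties, and then letting $\Delta_W$ act on $L_pH_k(W)\otimes\Q$ forces $\Phi_{p,k}$ to be surjective in the required range. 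When $X$, hence $W$, carries a holomorphic vector field with isolated nonempty zero locus, this step is supplied unconditionally by Theorem \ref{Thm1} and Corollary \ref{Cor3.10}; in general it is the crux.

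For the first inclusion the plan is to invoke whatever cases of the Generalized Hodge Conjecture are available: varieties with sufficiently small cohomology, varieties dominated by products of curves and by abelian varieties of appropriate type, and, via Corollary \ref{Cor3.09} and Corollary \ref{Cor3.10}, all varieties satisfying the hypothesis of Theorem \ref{Thm1}, for which the three filtrations collapse to $H_k(X,\Q)$ when $k$ is even and to $0$ when $k$ is odd. Combining the two inclusions with the known ones then yields the chain of equalities.

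The principal obstacle is structural and unavoidable: the first inclusion is the Generalized Hodge Conjecture, and the second, even after the reduction to $W$ of dimension $\leq k-p$, is itself of Hodge-conjecture strength, since producing the required decomposition of $\Delta_W$ already forces surjectivity of cycle class maps that is open in general. Thus the realistic output of this plan is a clean reduction of the full conjecture to these two classical inputs, together with an unconditional proof of it for the varieties of Theorem \ref{Thm1}.
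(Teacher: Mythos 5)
The statement you were asked about is stated in the paper as a \emph{conjecture} (Friedlander--Mazur, with Grothendieck's generalized Hodge conjecture built in), and the paper offers no proof of it: it is an open problem, recorded only so that the subsequent corollary can assert it for the special varieties of Theorem \ref{Thm1}. Your proposal correctly recognizes this, and as a survey of the state of affairs it is accurate: the inclusions $T_pH_k(X,\Q)\subseteq G_pH_k(X,\Q)\subseteq\tilde F_pH_k(X,\Q)$ are the known Friedlander--Mazur results, the reverse inclusion $\tilde F_p\subseteq G_p$ is the Generalized Hodge Conjecture, and your reduction of $G_p\subseteq T_p$ to the statement $H_k(W,\Q)=T_pH_k(W,\Q)$ for $\dim W\leq k-p$ (using covariance of $\Phi_{p,k}$ for proper push-forward) is sound. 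So there is no error, but there is also no proof: what you produce is a reduction to two open problems, which is the honest best one can do.

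The one substantive point of comparison with the paper: the only part of your plan that can actually be carried out is the unconditional case where $X$ carries a holomorphic vector field with isolated nonempty zero locus, and there the paper's route is slightly more direct than the one you sketch. Rather than running a diagonal decomposition on the auxiliary varieties $W$, the paper deduces the conjecture for such $X$ immediately from Corollary \ref{Cor3.11}: since $\Phi_{p,k}:L_pH_k(X)_{\Q}\to H_k(X,\Q)$ is an isomorphism for all $k\geq 2p\geq 0$, the topological filtration $T_pH_k(X,\Q)$ is all of $H_k(X,\Q)$ in that range (and $H_k(X,\Q)=0$ for $k$ odd by Corollary \ref{Cor3.09}), so the three filtrations are squeezed together by the known chain of inclusions. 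Your closing paragraph reaches the same conclusion, so for the provable special case your strategy and the paper's agree in substance; for the general statement, neither you nor the paper proves anything, nor could you be expected to.
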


Corollary \ref{Cor3.11} implies directly that the Friedlander-Mazur conjecture and the
 Generalized Hodge conjecture (see \cite{Grothendieck}) holds for such a smooth projective variety.
\begin{corollary}
Under the assumption of Theorem  \ref{Thm1}, the Friedlander-Mazur conjecture and the Generalized Hodge conjecture hold for $X$,
i.e., for $k\geq 2p\geq 0$,  we have $$
T_pH_k(X,{\mathbb{Q}})= G_pH_k(X,{\mathbb{Q}})=\tilde{F}_pH_k(X,{\Q}).
$$
\end{corollary}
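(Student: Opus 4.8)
The plan is to deduce the final corollary directly from Corollary \ref{Cor3.11} together with the known chain of inclusions among the three filtrations. First I would recall what must be established: for $X$ as in Theorem \ref{Thm1} and all $k \geq 2p \geq 0$, the three subspaces coincide,
$$
T_pH_k(X,\Q) = G_pH_k(X,\Q) = \tilde{F}_pH_k(X,\Q).
$$
Since the inclusions $T_pH_k(X,\Q) \subseteq G_pH_k(X,\Q) \subseteq \tilde{F}_pH_k(X,\Q)$ hold for every smooth projective variety (as recalled just above the Friedlander--Mazur conjecture in the excerpt), it suffices to prove the reverse inclusion $\tilde{F}_pH_k(X,\Q) \subseteq T_pH_k(X,\Q)$, i.e. that the Hodge filtration is contained in the topological filtration.

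The key step is to identify $T_pH_k(X,\Q)$ explicitly. By definition $T_pH_k(X,\Q)$ is the image in $H_k(X,\Q)$ of the iterated $s$-map, equivalently the image of $\Phi_{p,k}: L_pH_k(X)_\Q \to H_k(X,\Q)$. By Corollary \ref{Cor3.11}, $\Phi_{p,k}$ is an isomorphism for all $k \geq 2p \geq 0$, so $T_pH_k(X,\Q) = H_k(X,\Q)$ for every such pair. On the other hand, by Corollary \ref{Cor3.09} the Hodge structure on $H_k(X,\Q)$ is purely of Hodge--Tate type: $H_k(X,\Q) = 0$ for $k$ odd, and for $k = 2m$ even the only nonzero Hodge piece is of type $(m,m)$. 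Consequently the maximal sub-Hodge structure of span $k - 2p$ inside $H_k(X,\Q)$ is all of $H_k(X,\Q)$ when $2p \leq k$ (span $k-2p \geq 0$ is automatically satisfied since the whole group has span $0$), so $\tilde{F}_pH_k(X,\Q) = H_k(X,\Q)$ as well. Chaining these, $T_pH_k(X,\Q) = H_k(X,\Q) = \tilde{F}_pH_k(X,\Q)$, which forces equality throughout the sandwich $T_p \subseteq G_p \subseteq \tilde{F}_p$, giving the Generalized Hodge conjecture for $X$; and the equality $T_p H_k = G_p H_k$ is precisely the Friedlander--Mazur statement.

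I do not expect a serious obstacle here: the corollary is essentially a bookkeeping consequence of Corollary \ref{Cor3.11} and Corollary \ref{Cor3.09}. The one point that needs a little care is the edge case $k$ odd, where all three groups vanish and the equalities hold trivially; and the observation that the span condition in the definition of $\tilde{F}_pH_k$ imposes no constraint once $H_k(X,\Q)$ is already of span $0$. So the main (minor) obstacle is simply making sure the definitions of $T_p$, $G_p$, and $\tilde{F}_p$ are invoked correctly and that the identification $T_pH_k(X,\Q) = \operatorname{im}\Phi_{p,k}$ from Friedlander--Mazur is applied in the right range $k \geq 2p \geq 0$.
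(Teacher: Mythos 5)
Your proposal is correct and follows essentially the same route as the paper, which simply notes that the corollary is a direct consequence of Corollary \ref{Cor3.11}: since $T_pH_k(X,\Q)$ is the image of $\Phi_{p,k}\otimes\Q$, its surjectivity forces $T_pH_k(X,\Q)=H_k(X,\Q)$, and the standard inclusions $T_p\subseteq G_p\subseteq \tilde{F}_p\subseteq H_k$ then collapse to equalities. Your extra verification via Corollary \ref{Cor3.09} that $\tilde{F}_pH_k(X,\Q)=H_k(X,\Q)$ is harmless but not needed once $T_pH_k(X,\Q)=H_k(X,\Q)$ is known.
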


\emph{Acknowledgements.}
The project was partially sponsored by  STF of Sichuan province, China(2015JQ0007) and NSFC(11771305).

\end{document}